\documentclass[12pt]{article}
\usepackage[intlimits]{amsmath}
\usepackage{amsfonts,amssymb,amscd,amsthm,cite}
\input xy
\xyoption{all}

\usepackage{ifpdf}
\ifpdf 
  \usepackage[pdftex]{graphicx}
  \DeclareGraphicsExtensions{.pdf,.png,.jpg,.jpeg,.mps}
  \usepackage{pgf}
  \usepackage{tikz}
\else 
  \usepackage{graphicx}
  \DeclareGraphicsExtensions{.eps,.bmp}
  \DeclareGraphicsRule{.emf}{bmp}{}{}
  \DeclareGraphicsRule{.png}{bmp}{}{}
  \usepackage{pgf}
  \usepackage{tikz}
  \usepackage{pstricks}
\fi

\setlength{\textheight}{23cm} \setlength{\textwidth}{17cm}
\setlength{\oddsidemargin}{-1cm} 
\setlength{\topmargin}{-1cm}

\def\re{\operatorname{Re}}

\newcommand{\Vect}{\mathop{\rm Vect}}

 \def\lra{\longrightarrow}

\newtheorem{theorem}{Theorem}[section]
\newtheorem{proposition}{Proposition}[section]
\newtheorem{lemma}{Lemma}[section]
\newtheorem{corollary}{Corollary}[section]

\theoremstyle{definition}

\newtheorem{definition}{Definition}[section]

\newtheorem{remark}{Remark}[section]

\title{Elliptic $G$-operators on manifolds with isolated singularities}

\author{A.Yu. Savin,  B.Yu. Sternin}

\date{}

\begin{document}

\maketitle

\begin{abstract}
In the present work we study elliptic operators on manifolds with singularities in the situation, when the manifold is endowed with an action of a discrete group $G$.  
As usual in elliptic theory, the Fredholm property of an operator is governed by the properties of its principal symbol.  We show that the principal symbol   in our situation is a pair, consisting of  the symbol on the main stratum (interior symbol) and the symbol at the conical point (conormal symbol).  Fredholm property of elliptic elements is obtained.
\end{abstract}

\tableofcontents

\section*{Introduction} 

In the present work we study elliptic operators on manifolds with singularities in the situation, when the manifold is endowed with an action of a discrete group $G$ of diffeomorphisms. More precisely, we consider operators equal to linear combinations of products of (pseudo)differential  operators and shift operators along the orbits of the group action. Such operators are called  $G$-{\em operators},  since they are associated with an action of group $G$. Let us mention that the theory of $G$-operators on smooth manifolds is presently well developed: Fredholm property and related analytic questions are studied (e.g., see \cite{AnLe1,AnLe2}), index formulas and related topolo
gical problems are considered (e.g., see \cite{NaSaSt17,SaSt40,SaSt39}).  In this paper we study $G$-operators on manifolds with isolated singularities of conical type. Of course, the results of the paper can be extended to operators on manifolds with higher singularities: edges, corners, etc.   (e.g., see \cite{NSScS99,NaSaSt5,NaSaSt6}).

As usual in elliptic theory, Fredholm property of an operator is governed by the properties of its principal symbol.  We show that the principal symbol of a $G$-operator in our situation is a pair, consisting of  the symbol on the main stratum (interior symbol) and the symbol at the conical point (conormal symbol). In addition, it turns out that (unlike the interior symbol) the conormal symbol {\em is not} an element of the crossed product and therefore the standard technique of proving the Fredholm property of $G$-operators, which relies on crossed products (see \cite{AnLe2}), does not apply. One more feature of our theory is the fact that the conormal symbol at the conical point is an operator family exponentially depending on the parameter (cf. the classical theory of parameter-dependent families   \cite{AgVi1}). 
Finally, let us note that in the proof of the Fredholm property we systematically use the theory of   $C^*$-algebras.  

Let us briefly describe the contents of the paper. First, we give the statement of the problem   (Section 1). Then we  formulate the finiteness theorem, which is the main result of the paper. To this end, we compute the symbol of $G$-operators  using the method of frozen coefficients. Here, since we deal with nonlocal operators (containing shift operators), one has to freeze the coefficients of the operator on the entire orbit, rather than at one point. We also consider two examples: 1)   $\mathbb{Z}$-operators on the sphere $\mathbb{S}^2$; and 2) $\mathbb{Z}^2$-operators on the half-line $\mathbb{R}_+$. In these examples, the conditions, which give the Fredholm property, are written out explicitly.  Then (Section 5) we give the proof of the Fredholm property using the theory of  $C^*$-algebras.  As an important technical tool, we construct the theory of elliptic parameter-dependent families of   $G$-operators. At the end of the paper, there is a small appendix, in which we give a direct computation of the interior symbol of $G$-operators using the method of frozen coefficients.  

Let us note that problems of this type were considered earlier in the works of A.B.~Antonevich and his coworkers (e.g., see \cite{AnLe2} and the references there), however, most of the results obtained on this topic dealt with operators acting on   $L^2$-spaces, whereas we consider operators in weighted Sobolev spaces    $H^{s,\gamma}$ (see \cite{Kon1}).

The work was partially supported by RFBR (projects Nr.~12-01-00577 and~15-01-08392), DAAD, and Simons Foundation. We are grateful to Prof. Elmar Schrohe and Institute of Analysis at the Leibniz University of Hannover for hospitality. 

\section{Statement of the problem} 

In this section, we formulate the main problem of this work concerning elliptic operators associated with an action of a discrete group on a manifold with conical singularities. For a detailed exposition of elliptic theory on manifolds with conical singularities (without group actions) we refer the reader to \cite{Kon1}, see also \cite{NSScS99}.

\paragraph{$G$-manifolds with  conical points.} 
Let $M$ be a manifold with a conical point\footnote{The results of this paper can be generalized in a standard way to manifolds with an arbitrary finite number of conical points.} denoted by $pt$.   This means that the complement $M\setminus pt$ is a smooth manifold and there exists a diffeomorphism of a punctured neighborhood of the conical point and the Cartesian product  
$$
  (0,1)\times \Omega \quad \text{ with coordinates }(r,\omega).
$$
Here $\Omega$ is a closed smooth manifold (called the base of the cone), while $r=0$ corresponds to the conical point (for this value of $r$    $\Omega$ collapses to point).

{\em A diffeomorphism $g:M\to M$}   is a mapping such that:
\begin{enumerate}
\item[1)] $g:M\to M$ is a homeomorphism, which preserves the strata   $M\setminus pt$ and  $pt$ and induces diffeomorphisms of these strata;
\item[2)] in a neighborhood of the conical point the diffeomorphism has the form  
\begin{equation}\label{eq-diff1}
g(r,\omega)=(r e^{\beta_g},g(\omega)),
\end{equation}
where  $\beta_g$ is a real number and the diffeomorphism of $\Omega$  is also denoted by $g$ for brevity. 
\end{enumerate}  

Let $G$ be a discrete group.
\begin{definition}
Manifold $M$ is a {\em $G$-manifold}, if it is endowed with   $G$-action.
\end{definition}
To simplify the notation, we shall identify elements   $g\in G$ and corresponding diffeomorphisms   $g:M\to M$.

\paragraph{Weighted Sobolev spaces.}
Recall the definition of weighted Sobolev space  $H^{s,\gamma}(M)$ of functions on a manifold with conical points. The elements of this space are functions in the Sobolev space   $H^s$ outside a neighborhood of the conical point. Hence, it suffices to define the norm only for functions supported in a small neighborhood of the conical point.   

The norm of function $u(r,\omega)$ is defined as
\begin{equation}\label{eq-mel1aa}
\|u\|^2_{s,\gamma}=\int_{\Omega\times\mathbb{R}_+}\left|\left(1+\left(ir\frac\partial{\partial r}\right)^2+\Delta_{\Omega}\right)^{s}(r^{ \gamma}u(r,\omega))\right|^2_{L^2(\Omega)}d\omega \frac {dr}r,
\end{equation}
where $\Delta_\Omega$ is a nonnegative Laplace operator on  $\Omega$, or equivalently
\begin{equation}\label{eq-mel1}
\|u\|^2_{s,\gamma}=\int_{L_\gamma}\left\|\left(1-p^2+\Delta_{\Omega}\right)^{s}\widetilde u(p,\omega)\right\|^2_{L^2(\Omega)}dp,
\end{equation}
where 
\begin{equation}\label{eq-mel2}
\widetilde u(p,\omega)=\mathcal{M}_{r\to p}u=\frac 1{\sqrt{2\pi i}}\int_0^\infty r^p u(r,\omega) \frac {dr}r 
\end{equation}
is the Mellin transform of function $u(r,\omega)$ with respect to the radial variable $r$; 
$$
 L_\gamma=\{p\in\mathbb{C}\;|\;\re p=\gamma\}
$$ 
is the weight line.

\begin{remark}
The space $H^{0,\gamma}(M)$ is  just the  space $L^2\left(M,\mu_\gamma\right)$ of  square-integrable functions with respect to the measure 
$$
  \mu_\gamma= r^{ 2\gamma} \mu \frac{dr}r,
$$
where $\mu$ is a measure on $\Omega$. The equivalence of expressions   \eqref{eq-mel1} and  \eqref{eq-mel1aa} follows from the properties of the Mellin transform, in particular, from Mellin--Plancherel equality  (e.g., see \cite{Ster8}):
$$
 \int_{\re p=\gamma} |\widetilde f(p)|^2dp= \int_0^{\infty}r^{2\gamma} |f(r)|^2\frac {dr}r.
$$
\end{remark}

\paragraph{$G$-operators on manifolds with conical points.}
Let $M$ be a  $G$-manifold. The group action induces a representation of $G$ in Sobolev spaces by shift operators:
$$
T_g: H^{s,\gamma}(M)\longrightarrow H^{s,\gamma}(M),\text{ where } (T_g u)(x)=u(g^{-1}x).
$$
\begin{definition}
{\em A conical  $G$-operator} on $M$ is an operator of the form
\begin{equation}\label{eq-op12a}
 D=\sum_{h\in G} D_h T_h:H^{s,\gamma}(M)\longrightarrow H^{s-m,\gamma+m}(M),
\end{equation}  
where
$$
D_h=r^{-m}D_h\left(r,-r\frac{\partial}{\partial r},\omega,-i\frac{\partial}{\partial \omega}\right)
$$
is a conical differential operator on   $M$ of order $m$ and only finitely many operators   $D_h$ are nonzero, so that the sum in  \eqref{eq-op12a} is actually finite.
\end{definition}

The aim of the present paper is to study ellipticity conditions, which guarantee   Fredholm property of operator~\eqref{eq-op12a}.

\section{Main results}

As usual in elliptic theory, the conditions, under which one obtains Fredholm property, are formulated in terms of the principal symbol of the operator. We shall refer to the principal symbol as the ``symbol'' for brevity. In our situation the symbol is a pair: the symbol on the main stratum (interior symbol) and the symbol at the conical point (conormal symbol\footnote{In the literature, one can also find the term ``indicial family''.}).
Let us give the definitions of these symbols.

\paragraph{Interior symbol.} To describe this symbol,  we need to extend the group action to the cotangent bundle of the manifold.

Denote the cotangent bundle of $M$ by $T^*M$ (e.g., see \cite{NSScS99}).

{ Recall the definition of $T^*M$. Let $\overline{M}=(M\setminus pt)\cup \Omega $ be the blowup of $M$, which is a manifold with boundary equal to $\Omega$. Denote by $r$ and $\omega$  the normal and tangent variables at the boundary.
By $TM$ we denote the vector bundle over $\overline{M}$, whose sections are generated as a  $C^\infty(\overline{M})$-module by vector fields 
$$
\frac\partial{\partial r}, \quad \frac 1 r \frac\partial{\partial \omega},
$$
which are of unit length in the conical metric of the form   $dr^2+r^2 d\omega^2$.

By  $T^*M\in\Vect(\overline{M})$ we denote the dual bundle (with respect to the natural pairing of vectors and covectors). One has the  vector bundle isomorphism
\begin{equation}\label{eq-isa3}
\begin{array}{ccc} T^*M|_{\partial \overline M} &\simeq &  T^*\Omega\times\mathbb{R},\\
    a dr+ b rd\omega &\mapsto & (b d\omega,a).
\end{array}
\end{equation}} 

A diffeomorphism $g:M\to M$ of the main manifold induces  diffeomorphism of the cotangent bundle  
$$
\partial g= ({}^tdg)^{-1}:T^*M \longrightarrow T^*M,
$$
which is called the {\em codifferential} of $g$. Here $dg:T_xM\to T_{g(x)}M$ stands for the differential, while ${}^tdg:T^*_{g(x)}M\to  T_x^*M$ stands for the dual mapping.

Let us define the symbol of operator  \eqref{eq-op12a} at a point $(x_0,\xi)\in T^*_0M$ of the cotangent bundle off the zero section using the method of frozen coefficients. Note that the operator under consideration is essentially nonlocal:  equation   $Du=f$ relates values of the unknown function   $u$ and its derivatives on the entire \emph{orbit} $Gx_0\subset M$ rather than  at a single point $x_0\in M$. For this reason, unlike the classical (nonsingular) case,   we  freeze the coefficients of the operator on the entire orbit of  $x_0$. Performing such procedure, and applying Fourier transform from $x$ to $\xi$ at each point of the orbit, we can define the symbol as a function on the cotangent bundle off the zero section, with values in operators acting on functions on the orbit. A direct computation (which we place in the appendix at the end of the paper) gives the following expression for the interior symbol   (cf. \cite{Sav12}):
\begin{equation}\label{traj-symbol1}
\sigma_0(D)(x_0,\xi)=\sum_{h\in G}  
\sigma(D_h) \Bigl(g^{-1}(x_0),\partial g^{-1}(\xi)\Bigr)\mathcal{T}_h:l^2(G,m_{s,\gamma})\longrightarrow
l^2(G,m_{s-m,\gamma+m}).
\end{equation}
Here
\begin{itemize}
\item the symbol acts on functions on the orbit   $Gx_0$, which is identified with $G$ using the mapping $g \mapsto g^{-1}(x_0)$;

\item $(\mathcal{T}_hw)(g)=w(g h)$ is the right shift of functions on the group;

\item the interior symbol of $D_h$ at a point $(x,\xi)$  is denoted by $\sigma_0(D_h)(x,\xi)$, while the expression $\sigma_0(D_h) (g^{-1}(x_0),\partial g^{-1}(\xi))$ is treated as an operator of multiplication of functions   on $G$:
$$
w(g) \longmapsto  \sigma_0(D_h) (g^{-1}(x_0),\partial g^{-1}(\xi)) w(g);
$$

\item the elements of the space $l^2(G,m_{s,\gamma})$ are  square summable  functions $ w(g) $ with respect to the weight function   on $G$:
\begin{equation}\label{measure-invariant1}
m_{s,\gamma}(g)=  \frac{{\partial g^{-1}}^{*} \mu_\gamma  }{\mu_\gamma} (x_0,\xi)\cdot |\partial g^{-1}\xi|^{2s}.
\end{equation}
Hereinafter, we omit the dependence of the weight function on  the variables $(x_0,\xi)$ for brevity.
\end{itemize}

\begin{definition}\label{def1} The operator \eqref{traj-symbol1} is the {\em  interior symbol} of a $G$-operator  $D$ at a point $(x_0,\xi)\in T^*_0M$.
\end{definition}

\begin{remark}
The expression \eqref{traj-symbol1} has the following natural meaning. Namely, according to this formula:  
\begin{itemize}
\item the symbol of the shift operator $T_h$ is a right shift  $\mathcal{T}_h$ on the group;
\item the symbol of differential operator $D_h$ is an operator of multiplication by the values of the symbol   $\sigma(D_h)$ at the corresponding points on the orbit;
\item finally, the symbol of  $D=\sum_h D_hT_h$ is the sum of products of the symbols of $D_h$ and $T_h$.
\end{itemize}
\end{remark}

\begin{remark} 
In the general case, interior symbol depends on   $x_0,\xi$ in a quite complicated way. For example, it is frequently discontinuous in   $x_0,\xi$. This is related with the fact that the structure of orbit can depend on an initial point on the orbit in a very complicated way.  
\end{remark}

\begin{proposition}\label{prop-4}
Given two $G$-operators $D_1$ and $D_2$, we have
$$
\sigma_0(D_1D_2)=\sigma_0(D_1)\sigma_0(D_2).
$$
\end{proposition}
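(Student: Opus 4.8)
The plan is to reduce the statement to a purely algebraic identity on $l^2(G)$, built from two classical \emph{local} facts about differential operators: (i) the usual principal symbol is multiplicative, $\sigma(PQ)=\sigma(P)\sigma(Q)$, and (ii) under conjugation by a shift operator it transforms by the codifferential. Concretely, for a conical differential operator $P$ and $g\in G$ set $P^g:=T_gPT_g^{-1}$, which is again a conical differential operator, and I would first record the transformation rule
\[
\sigma(P^g)(x,\xi)=\sigma(P)\bigl(g^{-1}x,\partial g^{-1}(\xi)\bigr),
\]
together with the fact that $\partial(gh)=\partial g\,\partial h$ (so that $(\partial g)^{-1}=\partial g^{-1}$), both of which follow from the definition $\partial g=({}^tdg)^{-1}$ and the naturality of the principal symbol as a function on $T^*M$.

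Writing $D_1=\sum_{h_1}A_{h_1}T_{h_1}$ and $D_2=\sum_{h_2}B_{h_2}T_{h_2}$ with $A_{h_1},B_{h_2}$ conical differential operators, I would compute the composition by pushing each shift to the right:
\[
D_1D_2=\sum_{h_1,h_2}A_{h_1}T_{h_1}B_{h_2}T_{h_2}=\sum_{h_1,h_2}A_{h_1}B_{h_2}^{h_1}\,T_{h_1h_2}.
\]
Reindexing by $h=h_1h_2$ (so $h_2=h_1^{-1}h$) identifies $D_1D_2=\sum_h C_hT_h$ with the differential coefficient $C_h=\sum_{h_1}A_{h_1}B_{h_1^{-1}h}^{h_1}$; since the sums are finite this is legitimate. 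By Definition~\ref{def1} the symbol $\sigma_0(D_1D_2)$ is then $\sum_h M^{C_h}\mathcal{T}_h$, where $M^{C_h}$ denotes multiplication by $\sigma(C_h)(g^{-1}x_0,\partial g^{-1}\xi)$, and by (i)--(ii) the symbol of $C_h$ at this point unwinds to $\sum_{h_1}\sigma(A_{h_1})(g^{-1}x_0,\partial g^{-1}\xi)\,\sigma(B_{h_1^{-1}h})\bigl((gh_1)^{-1}x_0,\partial(gh_1)^{-1}\xi\bigr)$, using the homomorphism property of $\partial$ to combine $\partial h_1^{-1}\,\partial g^{-1}=\partial(gh_1)^{-1}$.

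On the other side I would multiply the two symbols directly as operators on $l^2(G)$. Writing each $\sigma_0(D_i)$ as a finite sum of (multiplication operator)$\,\times\mathcal{T}_h$, the only nontrivial step is the commutation $\mathcal{T}_{h_1}M^{B_{h_2}}=M^{B_{h_2},h_1}\mathcal{T}_{h_1}$, where $M^{B_{h_2},h_1}$ multiplies by $\sigma(B_{h_2})((gh_1)^{-1}x_0,\partial(gh_1)^{-1}\xi)$; this is immediate from $(\mathcal{T}_{h_1}w)(g)=w(gh_1)$ and $\mathcal{T}_{h_1}\mathcal{T}_{h_2}=\mathcal{T}_{h_1h_2}$. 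Collecting the terms with the same total shift $\mathcal{T}_h$ then produces exactly the expression for $\sigma_0(D_1D_2)$ found above, completing the identification.

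The main obstacle is the transformation rule $\sigma(T_gPT_g^{-1})(x,\xi)=\sigma(P)(g^{-1}x,\partial g^{-1}\xi)$ in the \emph{conical} setting: one must check that the relevant cotangent lift is precisely the codifferential acting on the compressed bundle $T^*M$ from \eqref{eq-isa3}, and that the special form $g(r,\omega)=(re^{\beta_g},g(\omega))$ near the conical point keeps $P^g$ within the class of conical differential operators, preserving orders and weights. The remaining bookkeeping --- that each multiplication operator maps the correct weighted space $l^2(G,m_{s,\gamma})$ to the next, so that the composition is well defined --- is forced by the definition \eqref{measure-invariant1} of the weights together with the homogeneity of the symbols, and is routine once the algebraic identity is in place.
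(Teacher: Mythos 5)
Your proof is correct, and it carries out precisely the route the paper mentions but does not execute: the paper's own proof of Proposition~\ref{prop-4} is a one-line appeal to the construction of $\sigma_0$ by freezing coefficients along the orbit (localization commutes with composition, so multiplicativity is built in), with the remark that ``the desired formula can also be obtained directly using \eqref{traj-symbol1} only'' --- which is exactly what you do. Your computation rests on two correct ingredients: the covariance rule $\sigma(T_{g}PT_{g}^{-1})(x,\xi)=\sigma(P)\bigl(g^{-1}x,\partial g^{-1}(\xi)\bigr)$ together with $\partial(gh)=\partial g\,\partial h$, and the commutation $\mathcal{T}_{h_1}M^{B}=M^{B,h_1}\mathcal{T}_{h_1}$ on $l^2(G)$, which matches the conventions of the paper ($(\mathcal{T}_h w)(g)=w(gh)$, $T_{h_1}T_{h_2}=T_{h_1h_2}$); the reindexing $h=h_1h_2$ on both sides then makes the two expressions coincide term by term. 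What the two approaches buy: the paper's argument is shorter but leans on the informal frozen-coefficient procedure, whereas your version is a self-contained algebraic verification that also makes visible the crossed-product structure of the symbol (it is the standard covariant-representation formula, anticipating the representations $\pi_{x,\xi}$ of Section~\ref{sec-c1}). You correctly flag the one point genuinely requiring care in the conical setting --- that $T_{h_1}B_{h_2}T_{h_1}^{-1}$ stays in the conical class and that the cotangent lift on the compressed bundle is the codifferential; both follow from the special form \eqref{eq-diff1} (under $r\mapsto re^{\beta}$ one has $r\partial/\partial r\mapsto r\partial/\partial r$ and $T_g r^{-m}T_g^{-1}=e^{m\beta_g}r^{-m}$, and on $T^*M|_{\partial\overline M}\simeq T^*\Omega\times\mathbb{R}$ the lift acts as $(\eta,\tau)\mapsto(\partial g(\eta),\tau)$, consistent with \eqref{traj-symbol2}), so your sketch closes without difficulty.
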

\begin{proof}
This follows directly from the fact that the symbol was obtained by the method of frozen coefficients. However, the desired formula can also be obtained directly using    \eqref{traj-symbol1} only.
\end{proof}

Let us compute the restriction  $\sigma_0(D)|_{\partial T^*M}$ of the interior symbol to the boundary of the cotangent bundle, i.e., for  $r=0$. Recall that near the conical point we have coordinates $r,\omega$ and dual coordinates $\tau,\eta$.
For the operator \eqref{eq-op12a}  and $(x,\xi)=(0,\omega ,\tau,\eta)\in \partial T^*M$ we have
\begin{equation}\label{traj-symbol2}
\sigma_0(D)(0,\omega ,\tau,\eta)=\sum_{h\in G}  
\sigma_0(D_h) \Bigl(0,g^{-1} (\omega),\tau, \partial g^{-1}(\eta)\Bigr)\mathcal{T}_h:l^2(G,m_{s,\gamma})\longrightarrow
l^2(G,m_{s-m,\gamma+m}),
\end{equation}
where the weight function $m_{s,\gamma}$ is equal to
\begin{equation}\label{measure-invariant2}
m_{s,\gamma}(g)= e^{-2\beta_g\gamma} \cdot \frac{{\partial g^{-1} }^{*} \mu  }{\mu} (\omega)\cdot \Bigl(\tau^2+|\partial g^{-1} \eta|^2\Bigr)^{s},\quad \text{here } \mu \text{ is a measure on }\Omega.
\end{equation}
In this formula  we used the special form \eqref{eq-diff1} of $g$ near the conical point.
In particular, it follows from \eqref{measure-invariant2} that
$$
m_{s,\gamma}(g)= e^{-2\beta_g\gamma} 
$$
 if $g:\Omega\to\Omega$ is an isometry.

\paragraph{Conormal symbol.}
To obtain the symbol of the operator \eqref{eq-op12a} at the conical point, we note that by assumption this point is fixed under the group action on the manifold. Hence, to obtain the symbol it suffices to freeze the coefficients at this point, i.e., to set   $r=0$ in the coefficients. We get
$$
\sum_{h\in G} r^{-m}D_h\left(0,-r\frac{\partial}{\partial r},\omega,-i\frac{\partial}{\partial \omega}\right) T_h:H^{s,\gamma}(\mathbb{R}_+\times\Omega)\longrightarrow H^{s-m,\gamma+m}(\mathbb{R}_+\times\Omega).
$$
Now we omit the factor $r^{-m}$ and apply Mellin transform $\mathcal{M}_{r\to p}$ (see \eqref{eq-mel2}).
As a result in the coordinates $\omega,p$ we obtain the operator family 
\begin{equation}\label{eq-op12}
  \sum_{h\in G} D_h\left(0,p,\omega,-i\frac{\partial}{\partial \omega}\right) T'_h e^{\beta_h p}:H^s(\Omega)\longrightarrow H^{s-m}(\Omega).
\end{equation} 
where  $p\in L_\gamma$,  while 
$$
T'_h: H^s(\Omega) \longrightarrow H^s(\Omega), \qquad (T'_h u) (\omega)=u(h^{-1}(\omega))
$$
is the shift operator on the base of the cone. Note that to obtain  \eqref{eq-op12}, we used the fact that the Mellin transform takes operator $-r\partial/\partial r$ to the operator of multiplication by  $p$,  and the dilation operator $u(r)\mapsto u(re^{-\beta})$  to the operator of multiplication by $e^{\beta p}$, while the elements of the weighted space are taken to functions on the line $L_\gamma$, ranging in the space $H^s(\Omega)$.

\begin{definition}
The operator family \eqref{eq-op12} is the  {\em conormal symbol of $G$-operator $D$} and is denoted by $\sigma_c(D)$.
\end{definition}
 
\begin{proposition}Let  
$$
D_1:H^{s,\gamma}(M)\longrightarrow H^{s-m_1,\gamma+m_1}(M),\qquad   D_2:H^{s-m_1,\gamma+m_1}(M)\longrightarrow H^{s-m_1-m_2,\gamma+m_1+m_2}(M)   
$$
be two $G$-operators of orders  $m_1,m_2$, respectively. Then their composition is a  $G$-operator and one has
$$
\sigma_c(D_2D_1)(p)=\sigma_c(D_2)(p+m_1)\sigma_c(D_1)(p).
$$
\end{proposition}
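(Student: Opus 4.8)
The plan is to first establish that the composition $D_2D_1$ is again a conical $G$-operator, and then compute its conormal symbol directly, tracking carefully how the weight shift $p\mapsto p+m_1$ emerges. I would write $D_1=\sum_h A_h T_h$ and $D_2=\sum_k B_k T_k$ with conical differential operators $A_h=r^{-m_1}a_h(r,-r\partial/\partial r,\omega,-i\partial/\partial\omega)$ and $B_k=r^{-m_2}b_k(\ldots)$ of orders $m_1,m_2$. The basic commutation rule $T_k A_h=(T_kA_hT_k^{-1})T_k$ together with $T_kT_h=T_{kh}$ lets me reindex the double sum by $l=kh$, giving $D_2D_1=\sum_l C_l T_l$ with $C_l=\sum_k B_k\,(T_kA_{k^{-1}l}T_k^{-1})$. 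Since every $k$ has the special form \eqref{eq-diff1} near the conical point, the conjugate $T_kA_{k^{-1}l}T_k^{-1}$ is again a conical differential operator; hence each $C_l$ is conical differential of order $m_1+m_2$, and the composition is a $G$-operator.

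The heart of the matter is the interaction of the factor $r^{-m_1}$ with the radial derivative. I would record three elementary relations: (i) near the cone $T_k r^{-m_1}=e^{m_1\beta_k}r^{-m_1}T_k$, which follows from $g(r,\omega)=(re^{\beta_g},g(\omega))$; (ii) the shift identity $f(-r\partial/\partial r)\,r^{-m_1}=r^{-m_1}f(-r\partial/\partial r+m_1)$ for any polynomial $f$, obtained from $(-r\partial/\partial r)r^{-m_1}=r^{-m_1}(-r\partial/\partial r+m_1)$; and (iii) the homomorphism property $\beta_{kh}=\beta_k+\beta_h$ (so that $\beta_{k^{-1}l}=\beta_l-\beta_k$), together with the fact that $T_k$ leaves $-r\partial/\partial r$ invariant since dilations commute with it. Using (i) and (ii) to pull all powers of $r$ to the left in $C_l=\sum_k B_k A_{k^{-1}l}^k$ and then freezing at $r=0$, I obtain the symbol of $C_l$ as $\sum_k e^{m_1\beta_k}\,b_k(0,-r\partial/\partial r+m_1,\ldots)\,a_{k^{-1}l}^k(0,-r\partial/\partial r,\ldots)$, where $a^k$ denotes the conjugate of $a$ by the angular diffeomorphism $k$.

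Finally, I would compute the right-hand side $\sigma_c(D_2)(p+m_1)\sigma_c(D_1)(p)$ directly from the explicit formula \eqref{eq-op12}. Commuting each $T'_k$ past the angular operator $a_h(0,p,\omega,-i\partial/\partial\omega)$ produces the conjugate $a_h^k$ and the factor $T'_kT'_h=T'_{kh}$; reindexing $l=kh$ and invoking (iii) collapses the exponential prefactors $e^{\beta_k(p+m_1)}e^{\beta_{k^{-1}l}p}$ into $e^{m_1\beta_k}e^{\beta_l p}$. Under the Mellin substitution $-r\partial/\partial r\mapsto p$, the resulting coefficient of $T'_l e^{\beta_l p}$ matches, term by term in $k$, the frozen symbol of $C_l$ computed above, which proves the identity.

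I expect the main obstacle to be exactly steps (ii)–(iii): correctly bookkeeping the shift $p\mapsto p+m_1$, which arises from commuting the $r^{-m_1}$ factor of $D_1$ through the $p$-dependence (i.e.\ the radial part $-r\partial/\partial r$) of the symbol of $D_2$, and verifying that the exponential prefactors $e^{m_1\beta_k}$ and the reindexing conspire so that the frozen-plus-Mellin symbol of the composition and the pointwise product $\sigma_c(D_2)(p+m_1)\sigma_c(D_1)(p)$ agree. Everything else is a routine consequence of $\beta$ being a homomorphism and of conjugation by $g$ preserving the class of conical differential operators.
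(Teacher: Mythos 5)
Your proof is correct and takes essentially the paper's approach: the paper dismisses this proposition as ``proved by a direct computation,'' noting only that the argument shift is caused by the factor $r^{-m}$, and your relations (i)--(iii) --- in particular $f(-r\partial/\partial r)\,r^{-m_1}=r^{-m_1}f(-r\partial/\partial r+m_1)$ together with $\beta_{kh}=\beta_k+\beta_h$ --- supply exactly that computation. The bookkeeping of the conjugates $a_h^k$, the reindexing $l=kh$, and the collapse of $e^{\beta_k(p+m_1)}e^{\beta_{k^{-1}l}\,p}$ into $e^{m_1\beta_k}e^{\beta_l p}$ all check out against the definition \eqref{eq-op12}.
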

This proposition is proved by a direct computation. We note only that the shift of the argument of the conormal symbol in the latter formula is due to the presence of the factor   $r^{-m}$ in \eqref{eq-op12a}  (cf. \cite{NSScS99} in the classical case).

\paragraph{Ellipticity and Fredholm property.}

\begin{definition}\label{def-ell4}
A $G$-operator $D$ (see \eqref{eq-op12a}) on a manifold with conical points is  {\em elliptic}, if it has invertible
\begin{enumerate}
\item[1)] interior symbol $\sigma_0(D)(x,\xi)$ for all $(x,\xi)\in T^*_0M$; 
\item[2)] conormal symbol $\sigma_c(D)(p)$ for all  $p$  on the weight line $L_\gamma=\{\re p=\gamma\}$.
\end{enumerate}
\end{definition}

\begin{theorem}\label{th-finite1}
If a $G$-operator
$$
 D=\sum_{h\in G} D_h T_h:H^{s,\gamma}(M)\longrightarrow H^{s-m,\gamma+m}(M)
$$
is elliptic, then it is Fredholm.
\end{theorem}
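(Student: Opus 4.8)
The plan is to prove that ellipticity yields a two-sided parametrix modulo compact operators, i.e.\ to exhibit a $G$-operator $P$ of order $-m$ with $PD-1$ and $DP-1$ compact; equivalently, to show that under the symbol homomorphism $\sigma=(\sigma_0,\sigma_c)$ the class of $D$ is invertible in the quotient by the compacts. It is convenient first to reduce to order zero: sandwiching $D$ between $G$-invariant order-reducing isomorphisms $H^{0,\gamma'}(M)\to H^{s,\gamma}(M)$ and $H^{s-m,\gamma+m}(M)\to H^{0,\gamma'}(M)$ (elliptic cone operators whose interior and conormal symbols are the invertible scalar reducing families) turns $D$ into an operator of order $0$ on a fixed space $H^{0,\gamma'}(M)$, and by the multiplicativity of the two symbols (Proposition~\ref{prop-4} and its conormal counterpart) this preserves both ellipticity and, being a composition with invertible operators, the Fredholm property. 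The whole argument then rests on two facts to be established: that an order-$0$ conical $G$-operator whose \emph{two} symbols both vanish is compact on $H^{0,\gamma'}(M)$, and that pointwise invertibility of the symbols upgrades to invertibility of an actual parametrix symbol.

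For the first symbol I would invert the interior symbol \eqref{traj-symbol1}. Away from $pt$ the operator $D$ is an ordinary $G$-operator on the smooth manifold $M\setminus pt$, and its interior symbol is an element of the relevant crossed-product symbol algebra; invertibility of $\sigma_0(D)(x,\xi)$ for all $(x,\xi)\in T^*_0M$ then provides, by the standard crossed-product parametrix construction for $G$-operators on smooth manifolds \cite{AnLe2}, an interior parametrix $P_{\mathrm{int}}$ with $\sigma_0(P_{\mathrm{int}}D-1)=\sigma_0(DP_{\mathrm{int}}-1)=0$.

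The delicate half is the conormal symbol \eqref{eq-op12}. Under the Mellin transform the behaviour of $D$ at $pt$ is encoded in the operator family $\sigma_c(D)(p)=\sum_h D_h(0,p,\omega,-i\partial_\omega)\,T'_h\,e^{\beta_h p}$ on the base $\Omega$, with $p$ running over the weight line $L_\gamma$. Writing $p=\gamma+it$, the factors $e^{\beta_h p}=e^{\beta_h\gamma}e^{i\beta_h t}$ are bounded but oscillate in $t$, so that the family combines the shifts $T'_h$ on $\Omega$ with translations in $t$; this is exactly why $\sigma_c(D)$ does not belong to the crossed product $C(\Omega)\rtimes G$ and the smooth-case symbolic inversion is unavailable. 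Here I would build a calculus of parameter-dependent elliptic families of $G$-operators on $\Omega$: parameter-dependent ellipticity demands invertibility of $\sigma_c(D)(p)$ on $L_\gamma$ together with uniform invertibility as $|t|\to\infty$, and the leading behaviour of $\sigma_c(D)(p)$ as $|t|\to\infty$ is the restricted interior symbol \eqref{traj-symbol2}--\eqref{measure-invariant2} with $\tau\sim t$. Invertibility of that boundary interior symbol for all $(\tau,\eta)\ne0$ --- part of interior ellipticity --- therefore forces invertibility of the conormal family for large $|t|$; combined with the hypothesis that $\sigma_c(D)(p)$ is invertible at every finite $p\in L_\gamma$ and with continuity, this yields a uniformly bounded inverse $\sigma_c(D)(p)^{-1}$ over all of $L_\gamma$. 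Applying the inverse Mellin transform produces a cone parametrix $P_{\mathrm{cone}}$ with $\sigma_c(P_{\mathrm{cone}}D-1)=\sigma_c(DP_{\mathrm{cone}}-1)=0$.

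Finally I would glue $P_{\mathrm{int}}$ and $P_{\mathrm{cone}}$ by a partition of unity subordinate to the collar $(0,1)\times\Omega$, obtaining a global order-$0$ parametrix $P$ for which \emph{both} $\sigma_0(PD-1)$ and $\sigma_c(PD-1)$ vanish (and likewise for $DP-1$); the first fact above then makes the remainders compact, so $D$ is Fredholm and, undoing the order reduction, so is the original operator. The main obstacle is the conormal half: because of the oscillatory factors $e^{i\beta_h t}$ the family \eqref{eq-op12} lies outside any crossed product, so one must develop the parameter-dependent $G$-family theory on $\Omega$ from scratch within a $C^*$-algebraic framework and prove the uniform-in-$p$ invertibility estimate that reconciles the finite-$p$ invertibility on $L_\gamma$ with the behaviour at $|t|=\infty$ dictated by the interior symbol. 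Establishing that the two vanishing symbols really do imply compactness is the other, more routine, technical point.
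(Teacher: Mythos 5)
Much of your outline tracks the paper: the order reduction to $s=\gamma=0$ (though the paper uses plain elliptic cone $\psi$DOs $D_1,D_2$, Fredholm by classical theory --- no $G$-invariance is needed, only multiplicativity of the two symbols), the inversion of the interior symbol in the crossed product via the trajectory criterion of Antonevich--Lebedev, and, most importantly, your observation that invertibility of the boundary interior symbol \eqref{traj-symbol2} forces invertibility of the conormal family for large $|p|$; this is exactly the paper's Corollary~\ref{cor-7}, obtained from the exact sequence \eqref{eq-phi3} and used in Proposition~\ref{prop-5} to get the uniform bound on $\sigma_c(D)(p)^{-1}$ over all of $L_\gamma$. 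But your closing steps contain a genuine gap, and it is precisely the one the paper's architecture is built to avoid. The lemma you defer as ``routine'' --- an order-zero conical $G$-operator whose two symbols vanish is compact --- is needed for the remainders $PD-1$, $DP-1$, and these are \emph{not} finite sums $\sum_h R_h T_h$: your $P_{\mathrm{int}}$ and $P_{\mathrm{cone}}$ come from inverting symbols in $C^*$-closures, so they are infinite series, and for such elements your lemma amounts to exactness of the symbol sequence for the full operator algebra, i.e.\ that the kernel of the symbol map on $\overline{\pi(\Psi(M)\rtimes G)}$ is exactly $\mathcal{K}$. The paper never proves this, and in its generality (arbitrary discrete $G$, maximal crossed product, no freeness or amenability hypotheses) it is not available. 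Instead the paper constructs the quantization homomorphism $Op:\Sigma\to\mathcal{B}/\mathcal{K}$ and the commutative triangle \eqref{eq-triang1}, so that the almost-inverse is simply $Op(\sigma(D)^{-1})$ and compactness of the remainders is automatic from $Op$ being a homomorphism \emph{into the Calkin algebra}; the only ``equal symbols $\Rightarrow$ equal modulo $\mathcal{K}$'' fact ever invoked is the classical one for the individual non-$G$ cone operators $A_h$ (last step of \eqref{eq-long1}).

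Your partition-of-unity gluing fails for the same underlying reason. To localize a $G$-operator near the conical point one must transport cutoffs by the shifts: one needs neighborhoods $U,V$ of $pt$ with $g(U)\subset V$ for \emph{every} $g$ occurring in the operator. For a finite sum this is possible --- and is exactly how the paper defines $Op$ on the dense subalgebra $\Sigma_{alg}$, with the functions $\varphi,\psi$ --- but your parametrices are already infinite series, and since the dilation exponents $\beta_g$ in \eqref{eq-diff1} are in general unbounded over $G$, no such pair $U,V$ exists, so the glued operator is not even defined. The repair is the paper's route: quantize finite sums first, extend by continuity via the universal property of the crossed product, lift the inverse symbol $\sigma(D)^{-1}\in\Sigma$ abstractly rather than gluing concrete local parametrices, and conclude via the triangle; one must also add the small bridge you omit, namely the weight-conjugation diagram \eqref{eq-diag1}, which converts the hypothesis of Definition~\ref{def-ell4} (invertibility on $l^2(G,m_{s,\gamma})$ with the weight \eqref{measure-invariant1}) into the invertibility of $\pi_{x,\xi}(\sigma_0(D))$ on the unweighted $l^2(G)$ required by the crossed-product criterion of Proposition~\ref{prop-5}.
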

 
The proof of Theorem~\ref{th-finite1} uses the theory of $C^*$-algebras and is given in Section~\ref{sec-c1}.

\begin{remark}
The conormal symbol  $\sigma_c(D)(p)$ is a family of $G$-operators on $\Omega$, depending on the parameter $p$. In Section \ref{sec-c1} we develop elliptic theory for such families and show that the family   $\sigma_c(D)(p)$ is invertible for  $p\in L_\gamma$ large, provided the interior symbol   $\sigma_0(D)$ is elliptic for the weight $\gamma$.
\end{remark}

\begin{remark}
The results of this section extend to the case of pseudodifferential operators ($\psi$DO), i.e., the case, when the operators   $D_g$,  which appear in the definition of a $G$-operator, are pseudodifferential operators on $M$.  Concerning the theory of $\psi$DO on manifolds with conical points see, for example, the monograph  \cite{NSScS99} and the references  there.
\end{remark}

\section{Example: $\mathbb{Z}$-operators on the sphere}

\paragraph{Sphere as a $\mathbb{Z}$-manifold.}
Consider the sphere $\mathbb{S}^2$ as the extended complex plane with complex coordinate  $z$ and treat the points of the sphere $z=0$ and  $z=\infty$ as conical points  denoted by $S$ and $N$. On the sphere, we consider the action of group $\mathbb{Z}$, defined by iterations of the diffeomorphism 
$$
g(z)=zz_0, \text{ with } z_0=e^{ \beta+i\varphi_0} \text{ fixed.}
$$
(see Figure~\ref{fig1q}.) For definiteness, we assume that  $\beta>0$ and  $\varphi_0\in[0,2\pi)$.

\begin{figure}
 \begin{center}
  \includegraphics[width=5cm]{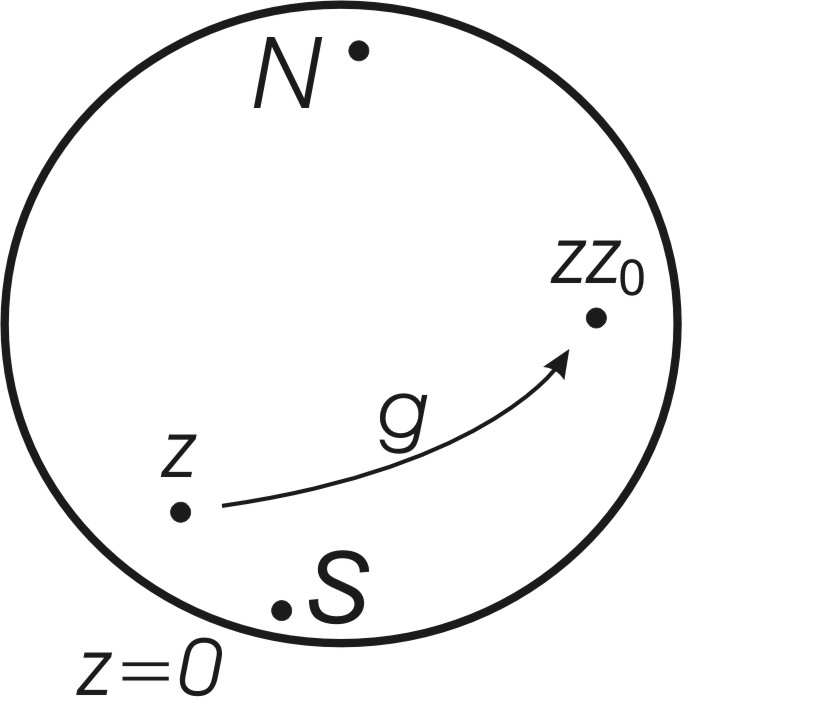} \end{center}\caption{Diffeomorphism of $\mathbb{S}^2$.}\label{fig1q}
\end{figure}

We replace  the complex coordinate $z$ by  real coordinates  $t,\varphi$ using the change of variables
$z=e^{-t+i\varphi}$, which defines the diffeomorphism  (blowup)
\begin{equation}\label{eq-blowup1}
\mathbb{S}^2\setminus\{N,S\}\simeq \mathbb{S}^1_\varphi\times \mathbb{R}_t
\end{equation}
of the complement of the conical points and the infinite cylinder. Moreover, under this diffeomorphism neighborhoods of the conical points  $S,N$  correspond to neighborhoods of the sections at $t=\pm\infty$, while the diffeomorphism  $g$ is equal to:
$$
g(\varphi,t)=(\varphi+\varphi_0,t-\beta),
$$
i.e., it is just a helical motion of the cylinder \eqref{eq-blowup1}.

\paragraph{Symbols of  $G$-operators.}

On $\mathbb{S}^2$ we consider  conical $\mathbb{Z}$-operators
$$
D=\sum_{0\le k\le m,\; l\in\mathbb{Z}} D_{l,k}\left(-i\frac{\partial}{\partial \varphi}\right)\left(-r\frac{\partial}{\partial r}\right)^k T^l:H^{s,\gamma }(\mathbb{S}^2)\longrightarrow H^{s-m,\gamma }(\mathbb{S}^2)
$$
with constant coefficients in  $\varphi,r$ (here we omitted the factor $r^{-m}$ right away), while the shift operator is equal to 
$$
(Tu)(\varphi,r)=u(\varphi-\varphi_0, r e^{-\beta}).
$$ 
Now $D$ acts in spaces with the weight $\gamma=(\gamma_+,\gamma_-)$, where the weight $\gamma_+$ is at $S$ and the weight  $\gamma_-$ is at  $N$ (one can check that  $D$ is a conical operator at the conical point $r=\infty$ using the inversion $r'=1/r$).

Let us compute the   interior and the conormal symbols of  $D$.

The interior symbol is equal to 
\begin{equation}\label{eq-ints1}
\sigma_0(D)(\eta,\tau)=
\sum_{0\le k\le m,\; l\in\mathbb{Z}} D_{l,k}\left(\eta\right)\left(i\tau \right)^k \mathcal{T}^l:l^2(\mathbb{Z},m_{s,\gamma})\longrightarrow l^2(\mathbb{Z},m_{s,\gamma}),
\end{equation}
where  $\eta^2+\tau^2\ne 0$ and  $(\mathcal{T}w)(n)=w(n+1)$.

The conormal symbols at  $r=0$ and  $r=\infty$ are equal to 
$$
\sigma_{c}(D)(p)=
\sum_{0\le k\le m,\; l\in\mathbb{Z}} D_{l,k}\left(-i\frac{d}{d \varphi}\right)p^k {T'}^l e^{-\beta l p}:H^s(\mathbb{S}^1)\longrightarrow H^{s-m}(\mathbb{S}^1),\qquad \re p=\gamma_\pm.
$$

By Theorem~\ref{th-finite1}   operator $D$ is Fredholm if its interior symbol  $\sigma_0(D)(\eta,\tau)$ is invertible whenever $\eta^2+\tau^2\ne 0$ and the conormal symbol  $\sigma_{c }(D)(p)$ is invertible on the weight lines  $\re p=\gamma_\pm$. To verify  these invertibility conditions, let us compute the weight function  $m_{s,\gamma}$ in \eqref{eq-ints1}.

\paragraph{Computation of the weight function.} A computation shows that the measure  $\mu_\gamma$ is equal to 
$$
\mu_\gamma=\left\{
  \begin{array}{ll}
   \displaystyle   r^{ 2\gamma_+}d\varphi \frac {dr}r, & \text{ if } r<1,\vspace{2mm}\\
    \displaystyle   r^{ 2\gamma_-}d\varphi \frac {dr}r, & \text{ if } r>2. 
  \end{array}
\right.
$$
We fix the metric   $d\varphi^2+r^{-2}dr^2$ on the cylinder.

\begin{proposition}
The weight function
\begin{equation}\label{measure-invariant1a}
 m_{s,\gamma}(n)= \left(\frac{{\partial g^{-n}}^{*}(\mu_\gamma (\eta^2+\tau^2)^{s})}{\mu_\gamma}\right)(t_0,\varphi_0,\eta,\tau).
\end{equation}
(see~\eqref{measure-invariant1}) up to equivalence does not depend on $s$ and is equal to 
\begin{enumerate}
 \item[1)] at a point in the interior of  $T^*_0M$:
 \begin{equation}\label{eq-53}
    m_{s,\gamma}(n)\simeq
      \left\{
  \begin{array}{ll}
    e^{ 2\gamma_-\beta n}, & \text{ if } n\ge 0,\\
    e^{ 2\gamma_+\beta n}, & \text{ if } n<0; 
  \end{array}
\right.
 \end{equation}
 \item[2)] at a point on the boundary $\partial T^*_0M=\{r=0 \text{ and } r=\infty\}$:
 $$
    m_{s,\gamma}(n)\simeq
      \left\{
  \begin{array}{ll}
    e^{ 2\gamma_+\beta n}, & \text{ if } r=0,\\
    e^{ 2\gamma_-\beta n}, & \text{ if } r= \infty. 
  \end{array}
\right.
 $$
\end{enumerate}
\end{proposition}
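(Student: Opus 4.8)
The plan is to pass to the cylinder coordinates $(t,\varphi)$ of \eqref{eq-blowup1}, in which $r=e^{-t}$ and the diffeomorphism becomes the helical motion $g(\varphi,t)=(\varphi+\varphi_0,t-\beta)$. The decisive observation is that in these coordinates the fixed metric $d\varphi^2+r^{-2}dr^2$ is the \emph{flat} product metric $d\varphi^2+dt^2$, so every power $g^{n}$ is an isometry and, being a translation, has differential equal to the identity. Two consequences follow immediately and organize the whole computation: first, the codifferential $\partial g^{-n}$ acts trivially on the fibres and preserves the covector norm, so that $|\partial g^{-n}\xi|^{2}=\eta^{2}+\tau^{2}$ is independent of $n$; second, the Jacobian $|\det Dg^{-n}|$ equals $1$.

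The first consequence already yields the claim that $m_{s,\gamma}(n)$ is, up to equivalence, independent of $s$. Indeed, in \eqref{measure-invariant1a} the pullback of the fibre function $(\eta^2+\tau^2)^s$ is $|\partial g^{-n}\xi|^{2s}=(\eta^2+\tau^2)^{s}$, a positive \emph{constant} in $n$; multiplying a weight function by such a constant changes neither the equivalence class of the weight nor the space $l^2(G,m_{s,\gamma})$. Hence it remains only to evaluate the Radon--Nikodym derivative $(\partial g^{-n})^{*}\mu_\gamma/\mu_\gamma$ at the base point. Since $g^{-n}$ is a unit-Jacobian translation, this derivative is simply the ratio of the density of $\mu_\gamma$ at two points of the cylinder related by the shift; writing $\mu_\gamma=r^{2\gamma}\,d\varphi\,dr/r=e^{-2\gamma t}\,d\varphi\,dt$, it is a ratio of exponentials $e^{-2\gamma t}$.

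For an interior base point ($t_0$ finite) I would then track how the shifted point runs off to the ends as $n\to\pm\infty$. Because $\beta>0$, it tends to the end $t=-\infty$ (the point $N$, where the weight is $\gamma_-$) as $n\to+\infty$, and to the end $t=+\infty$ (the point $S$, where the weight is $\gamma_+$) as $n\to-\infty$; substituting the asymptotic density $e^{-2\gamma_\pm t}$ of $\mu_\gamma$ at the relevant end reproduces exactly the two-sided exponential \eqref{eq-53}. Only the exponential \emph{rate} is needed: the finitely many indices $n$ for which the point lies in the transition region $1\le r\le 2$ contribute factors bounded above and below by positive constants, which are irrelevant for the equivalence class.

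For a boundary point I would instead start from the boundary form \eqref{measure-invariant2} of the weight. At a conical point the diffeomorphism restricts to the rotation $\varphi\mapsto\varphi+\varphi_0$ of $\Omega=\mathbb{S}^1$, which is an isometry preserving $\mu=d\varphi$; hence $(\partial g^{-1})^{*}\mu/\mu=1$ and $(\tau^{2}+|\partial g^{-1}\eta|^{2})^{s}=(\tau^{2}+\eta^{2})^{s}$ is again constant in $n$, so only the radial scaling factor survives. For the index $n$ this factor is $e^{-2\beta_{g^{-n}}\gamma}$ with $\beta_{g^{-n}}=-n\beta$, i.e.\ $e^{2n\beta\gamma}$, and inserting the weight of the respective end, $\gamma=\gamma_+$ at $r=0$ (the point $S$) and $\gamma=\gamma_-$ at $r=\infty$ (the point $N$), gives the stated exponentials $e^{2\gamma_+\beta n}$ and $e^{2\gamma_-\beta n}$; consistently, these are the limits of the interior formula as $t_0\to\pm\infty$. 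The one genuinely delicate point throughout is the sign bookkeeping: one must track correctly the orbit identification $g\mapsto g^{-1}(x_0)$, the pullback convention used in \eqref{measure-invariant1a} and \eqref{measure-invariant2}, and the assignment of $\gamma_\pm$ to the two ends, so that in the interior $n\ge 0$ is paired with $\gamma_-$ and $n<0$ with $\gamma_+$, while the radial scaling produces $\gamma_+$ at $r=0$ and $\gamma_-$ at $r=\infty$. Everything else is a direct substitution.
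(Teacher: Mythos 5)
Your write-up is, in substance, exactly the ``direct computation using \eqref{measure-invariant1a}'' that the paper invokes without giving details: passing to the cylinder coordinates \eqref{eq-blowup1}, in which the fixed metric $d\varphi^2+r^{-2}dr^2$ becomes the flat $d\varphi^2+dt^2$ and every $g^n$ is a translation with identity differential and unit Jacobian; concluding exact (not merely up-to-equivalence) $s$-independence because the fibre factor $(\eta^2+\tau^2)^s$ pulls back to itself; reducing the weight to a ratio of densities of $\mu_\gamma=e^{-2\gamma t}\,d\varphi\,dt$ with $\gamma=\gamma_+$ at one end and $\gamma_-$ at the other, up to two-sided bounded factors from the transition region $1\le r\le 2$; and settling the boundary case via the isometry specialization of \eqref{measure-invariant2}. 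Your observation that the boundary values must be the $t_0\to\pm\infty$ limits of the interior formula is a useful consistency check that the paper does not spell out.

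The one point that does not survive scrutiny is precisely the sign bookkeeping you yourself flagged as delicate. Since $g(\varphi,t)=(\varphi+\varphi_0,t-\beta)$ with $\beta>0$, the point $g^{-n}(x_0)$ appearing in \eqref{measure-invariant1a} has $t$-coordinate $t_0+n\beta$, which runs to $t=+\infty$, i.e.\ to $S$ where the weight is $\gamma_+$, as $n\to+\infty$ --- the \emph{opposite} of your assertion that it runs to $N$. Read literally, \eqref{measure-invariant1a} therefore yields $m(n)\simeq e^{-2\gamma_+\beta n}$ for $n\ge0$ and $e^{-2\gamma_-\beta n}$ for $n<0$, and likewise at $r=0$ the element $g^n$ has dilation exponent $\beta_{g^n}=n\beta$, so \eqref{measure-invariant2} gives $e^{-2\gamma_+\beta n}$ there. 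What you actually computed --- tracking $t_0-n\beta$ in the interior and taking $\beta_{g^{-n}}=-n\beta$ at the boundary --- consistently uses the identification of the index $n$ with the group element $g^{-n}$ rather than $g^{n}$. That convention is exactly what reproduces the stated \eqref{eq-53} and part 2), and it is the only convention under which the two parts of the proposition are mutually consistent (your limit check confirms this); but it is opposite to the literal reading of \eqref{measure-invariant1a}, and also to $(\mathcal{T}w)(n)=w(n+1)$ in \eqref{eq-ints1}, which pairs $n$ with $g^{n}$. So either the paper's displayed formula and its stated values differ by the reflection $n\mapsto -n$ (a convention slip that your bookkeeping silently compensates), or your tracking sentence has the two ends interchanged. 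In a self-contained proof you must state explicitly which identification you use: as written, the justifying sentence ``tends to $t=-\infty$ as $n\to+\infty$'' contradicts $g^{-n}(t_0)=t_0+n\beta$, even though every final formula you produce matches the statement. A minor related remark: at $r=\infty$ the conical coordinate is $1/r$, so both the local dilation exponent and the local weight exponent change sign relative to your global-$r$ bookkeeping; these two sign changes cancel, which is why your computation at $N$ nevertheless comes out right --- worth a sentence in the writeup.
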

This proposition is proved by a direct computation using  \eqref{measure-invariant1a}.

\paragraph{Examples.}

Consider the operator
\begin{equation}\label{eq-15a}
D=-r\frac{\partial }{\partial r} +(a+bT)\left(-i\frac{\partial }{\partial \varphi}\right)+(c+dT): H^{s,\gamma }(\mathbb{S}^2)
\longrightarrow H^{s-1,\gamma }(\mathbb{S}^2).
\end{equation}

1. The interior symbol of this operator is equal to 
\begin{equation}\label{eq-22}
\sigma_0(D)(\eta,\tau)=
i\tau +(a+b\mathcal{T})\eta:l^2(\mathbb{Z},m_{s,\gamma})\longrightarrow l^2(\mathbb{Z},m_{s-1,\gamma}),
\qquad (\mathcal{T}w)(n)=w(n+1),
\end{equation}
where the weight  $m_{s,\gamma}$ was defined in  \eqref{eq-53}.
It is easy to algebrize operator \eqref{eq-22}. Let us formulate the corresponding result.
\begin{lemma}[cf. \cite{Ster7}]\label{lemma-3}
Given $\gamma_+>\gamma_-$,  the Fourier transform 
\begin{equation}
  \begin{array}{ccc}
    \mathcal{F}: l^2(\mathbb{Z},m_{s,\gamma}) & \longrightarrow & \mathcal{O}(K_\gamma)\vspace{2mm}\\
     \{w(n)\} & \longmapsto & \sum_n w(n) \zeta^n  
  \end{array}
\end{equation}
takes the space  $l^2$ with the weight $m_{s,\gamma}$ to the space of holomorphic functions in the annulus
\begin{equation}\label{eq-ring1}
K_\gamma=\{r<|\zeta|<R\},\qquad r=e^{-\gamma_+\beta},\quad R=e^{-\gamma_-\beta},
\end{equation}
which are square integrable on  the boundary. If  $\gamma_->\gamma_+$, then the range of  Fourier transform is the space dual to the space of holomorphic functions in the annulus with  outer radius    $r$ and   inner radius $R$. 
\end{lemma}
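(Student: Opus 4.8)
The plan is to reduce the statement to the classical Laurent/Parseval description of the Hardy space of an annulus. By \eqref{eq-53} an element $w\in l^2(\mathbb{Z},m_{s,\gamma})$ is a two-sided sequence satisfying $\sum_{n\ge 0}|w(n)|^2 e^{2\gamma_-\beta n}+\sum_{n<0}|w(n)|^2 e^{2\gamma_+\beta n}<\infty$, and the first thing I would do is absorb the weight by the substitution $v(n)=m_{s,\gamma}(n)^{1/2}w(n)$. This identifies $l^2(\mathbb{Z},m_{s,\gamma})$ isometrically with the unweighted $l^2(\mathbb{Z})$ and, on the level of the transform, replaces the monomial $\zeta^n$ by $(e^{-\gamma_-\beta}\zeta)^n$ for $n\ge 0$ and by $(e^{-\gamma_+\beta}\zeta)^n$ for $n<0$; thus everything is reduced to a statement about ordinary Fourier coefficients and the radii at which they are summed.

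Next I would split $w=w_++w_-$ into its nonnegative- and negative-index parts and analyse $\mathcal{F}w_\pm$ separately. The one computational input needed is Parseval's identity on a circle: for a one-sided power series the mean square of its values over $\{|\zeta|=\rho\}$ equals $\sum_n|w(n)|^2\rho^{2n}$. Reading the radii off the weight exponents $\gamma_\mp$, the finiteness of the two halves of the weighted norm becomes \emph{exactly} square-integrability of $\mathcal{F}w_+$ on one bounding circle and of $\mathcal{F}w_-$ on the other. The part $\mathcal{F}w_+$ then extends holomorphically up to the outer radius $R$ and $\mathcal{F}w_-$ beyond the inner radius $r$, so their sum is holomorphic precisely on the overlap $r<|\zeta|<R$; conversely, the Laurent coefficients of any holomorphic function on $K_\gamma$ with $L^2$ boundary values recover a sequence $w$ of finite weighted norm. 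This gives a bounded isomorphism of $l^2(\mathbb{Z},m_{s,\gamma})$ onto the $L^2$-boundary Hardy space of $K_\gamma$, and the hypothesis $\gamma_+>\gamma_-$ is exactly what makes the overlap $r<|\zeta|<R$ nonempty.

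For the reversed case $\gamma_->\gamma_+$ the two radii cross, the overlap is empty, and $\mathcal{F}w$ no longer converges to a function on any annulus. Here I would instead pair $w$ with holomorphic test functions through the residue form $\langle\mathcal{F}w,u\rangle=\frac{1}{2\pi i}\oint \mathcal{F}w(\zeta)\,u(\zeta)\,d\zeta$, equivalently $\sum_n w(n)\,u_{-n-1}$ on Laurent coefficients. Since $m_{s,\gamma}^{-1}$ is precisely the weight attached to the annulus with radii interchanged, applying the first part of the lemma to $l^2(\mathbb{Z},m_{s,\gamma}^{-1})$ realizes this predual as the genuine Hardy space of the annulus with outer radius $r$ and inner radius $R$; checking that the residue pairing is bounded and nondegenerate then identifies the range of $\mathcal{F}$ with its topological dual, which is the assertion.

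The step I expect to be the main obstacle is the precise matching of conventions rather than any hard analysis: one must keep careful track of which bounding circle carries the positive and which the negative Laurent tail, and correspondingly of the signs in the radii $e^{\mp\gamma_\mp\beta}$, since a slip here collapses the annulus to the empty set or replaces it by its reciprocal. Once this bookkeeping is pinned down, the isomorphism in the first case and the duality in the second both follow from the standard Hardy-space theory of the annulus, and the fact that the construction is independent of $s$ (already recorded in \eqref{eq-53}) shows that the whole Sobolev scale is treated uniformly.
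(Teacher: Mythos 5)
Your proposal is correct and takes essentially the same route as the paper: the paper's own proof likewise reduces everything to the observation that $w\in l^2(\mathbb{Z},m_{s,\gamma})$ exactly when $\sum_{n\ge 0}|w(n)|^2e^{2\gamma_-\beta n}<\infty$ and $\sum_{n<0}|w(n)|^2e^{2\gamma_+\beta n}<\infty$, and that these two one-sided conditions are precisely convergence of the Laurent series $\sum_n w(n)\zeta^n$ in the annulus \eqref{eq-ring1}. Your extra steps --- the Parseval identification of the weighted norms with $L^2$ norms on the two bounding circles, and the residue-pairing argument identifying the range for $\gamma_->\gamma_+$ with the dual of the Hardy space of the reciprocal annulus (via the inverted weight $m_{s,\gamma}^{-1}$) --- simply make rigorous what the paper's two-sentence sketch, which does not treat the dual case or the boundary $L^2$ condition explicitly, leaves implicit.
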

\begin{proof}
The proof is based on the fact that  $w\in l^2(\mathbb{Z},m_{s,\gamma})$ if and only if 
$$
\sum_{n\ge 0} |w(n)|^2 e^{ 2\gamma_-\beta n}<\infty,\qquad 
\sum_{n< 0} |w(n)|^2 e^{ 2\gamma_+\beta n}<\infty.
$$
These conditions mean that the Fourier transform of this function, i.e., the Laurent  series
$$
\sum_n w(n)\zeta^n
$$
is absolutely convergent in the mentioned annulus. 
\end{proof}
By Lemma~\ref{lemma-3}, we transform the symbol to the operator of multiplication 
\begin{equation}\label{eq-24}
\sigma_0(D)(\eta,\tau)=
i\tau +(a+b\zeta^{-1})\eta:\mathcal{O}(K_\gamma)\longrightarrow \mathcal{O}(K_\gamma)
\end{equation}
acting in the space of analytic functions in the annulus.  Hence, the interior symbol is invertible if and only if this function is invertible in the annulus.
\begin{lemma}
The interior symbol \eqref{eq-22} is invertible if and only if
\begin{equation}\label{eq-20}
|\re a|> \frac{|b|}r,
\end{equation}
where  $r=\min(e^{-\gamma_+\beta},e^{-\gamma_-\beta})$.
\end{lemma}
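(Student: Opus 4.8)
The plan is to reduce the invertibility of the symbol family \eqref{eq-22} to a nonvanishing condition for a single function on a fixed annulus, and then to extract \eqref{eq-20} from an elementary modulus computation. First I would apply Lemma~\ref{lemma-3}, which identifies $l^2(\mathbb{Z},m_{s,\gamma})$ with the space $\mathcal{O}(K_\gamma)$ of holomorphic functions on the annulus \eqref{eq-ring1} and turns the shift $\mathcal{T}$ into multiplication by $\zeta^{-1}$; this is precisely the passage from \eqref{eq-22} to the multiplication operator \eqref{eq-24} by
$$f_{\eta,\tau}(\zeta)=i\tau+(a+b\zeta^{-1})\eta.$$
Since $f_{\eta,\tau}$ is holomorphic and continuous on the compact closed annulus $\overline{K_\gamma}$, multiplication by it is invertible on $\mathcal{O}(K_\gamma)$ exactly when $1/f_{\eta,\tau}$ is again a bounded holomorphic multiplier, i.e.\ exactly when $f_{\eta,\tau}$ has no zero on $\overline{K_\gamma}$. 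Hence the interior symbol is invertible if and only if $f_{\eta,\tau}(\zeta)\neq 0$ for all $(\eta,\tau)$ with $\eta^2+\tau^2\neq 0$ and all $\zeta\in\overline{K_\gamma}$.

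Next I would eliminate the degenerate directions and rewrite the nonvanishing condition algebraically. For $\eta=0$ one has $f_{0,\tau}=i\tau\neq 0$, so such directions impose no constraint. For $\eta\neq 0$ I divide by $\eta$, put $s=\tau/\eta\in\mathbb{R}$ and $w=\zeta^{-1}$, and the condition $f_{\eta,\tau}\neq 0$ becomes $a+is+bw\neq 0$. As $(\eta,\tau)$ ranges over all admissible directions the parameter $s$ ranges over all of $\mathbb{R}$, while $w=\zeta^{-1}$ ranges over the annulus $\{1/R\le|w|\le 1/r\}$, whose outer radius equals $\max(e^{\gamma_+\beta},e^{\gamma_-\beta})=1/r$ with $r=\min(e^{-\gamma_+\beta},e^{-\gamma_-\beta})$. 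Consequently invertibility of the whole family is equivalent to the statement that the set $L=\{w\,:\,a+is+bw=0\ \text{for some}\ s\in\mathbb{R}\}$ does not meet this closed annulus. (If $b=0$ then $L=\emptyset$ and the condition collapses to $\re a\neq 0$, which is exactly \eqref{eq-20} in that case.)

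Finally comes the geometric step. For $b\neq 0$ the set $L$ is the straight line $w=-(a+is)/b$, $s\in\mathbb{R}$; absorbing $\im a$ into $s$ gives $|w|^2=((\re a)^2+s^2)/|b|^2$, so as $s$ runs over $\mathbb{R}$ the modulus $|w|$ sweeps out exactly the half-line $[\,|\re a|/|b|,\infty)$. Therefore $L$ is disjoint from the closed annulus $\{1/R\le|w|\le 1/r\}$ if and only if its minimal modulus $|\re a|/|b|$ exceeds the outer radius $1/r$, that is, if and only if $|\re a|>|b|/r$, which is precisely \eqref{eq-20}. I expect the only genuine bookkeeping to be the uniform identification of the relevant outer radius as $1/r$ with $r=\min(e^{-\gamma_+\beta},e^{-\gamma_-\beta})$ in both orderings of the weights: for $\gamma_+>\gamma_-$ one argues directly on $\mathcal{O}(K_\gamma)$ as above, while for $\gamma_->\gamma_+$ one passes to the predual annulus supplied by Lemma~\ref{lemma-3} and repeats the computation. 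The modulus estimate itself is entirely routine.
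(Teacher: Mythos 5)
Your proposal is correct and follows essentially the same route as the paper: Lemma~\ref{lemma-3} reduces the symbol \eqref{eq-22} to the multiplication operator \eqref{eq-24} on $\mathcal{O}(K_\gamma)$, and the concluding step is the same elementary geometry --- your condition that the line $L=\{w: a+is+bw=0,\ s\in\mathbb{R}\}$ miss the annulus $\{1/R\le |w|\le 1/r\}$ is just the dual phrasing of the paper's condition that the annulus swept by $a+b\zeta^{-1}$, with outer radius $|b|/r$ and center $a$, miss the line $\re z=0$. If anything, your write-up is slightly more careful than the paper's, since you work on the closed annulus (which is what yields the strict inequality in \eqref{eq-20}) and you explicitly dispose of the cases $b=0$ and $\gamma_->\gamma_+$.
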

\begin{proof} 
The interior symbol is invertible for all  $\eta,\tau\ne 0$ if and only if
$$
 \re(a+b\zeta^{-1})\ne 0\text{ for all }\zeta\in K_\gamma.
$$ 
The numbers $a+b\zeta^{-1}$ run over the annulus with the outer radius $|b|/r$  with center at point $a$ (see Figure~\ref{fig1t}).
\begin{figure}
 \begin{center}
  \includegraphics[width=7cm]{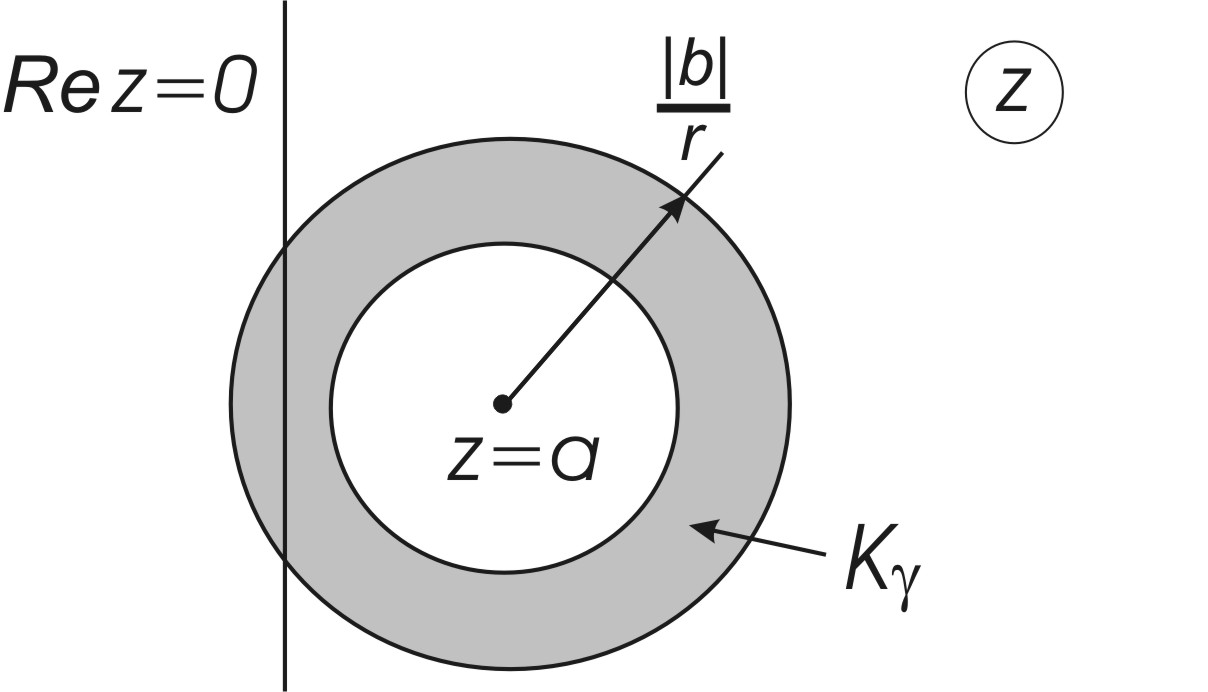} \end{center}\caption{The annulus $K_\gamma$ and the line $\re z=0$}\label{fig1t}
\end{figure}
The condition that the intersection of this annulus and the line  $\re z=0$ is empty is precisely  \eqref{eq-20}.
\end{proof}

We finally get the invertibility condition for the interior symbol in the form 
$$
\gamma_+,\gamma_-\le \frac{\ln|\re a|-\ln |b|}\beta.
$$

2. The conormal symbol of the operator \eqref{eq-15a} is
$$
\sigma_c(D)(p)=p+(a+b e^{-\beta p}T')\left(-i\frac{d }{d \varphi}\right)+(c+de^{-\beta p}T'):H^s(\mathbb{S}^1)\longrightarrow H^{s-1}(\mathbb{S}^1),
$$
where $(T'u)(\varphi)=u(\varphi-\varphi_0)$ is the shift operator on the circle. 
For $p\in L_\gamma$ large the conormal symbol is invertible  (under the assumption that the operator is interior elliptic).

Expanding functions on the circle in Fourier series, we obtain the ellipticity condition for the conormal symbol in the form: none of the equations 
\begin{equation}\label{eq-25}
p+(a+b e^{-\beta p-in\varphi_0})n+(c+de^{-\beta p-in\varphi_0})=0\quad \text{for all }n\in \mathbb{Z}
\end{equation}
has a solution  $p$ such that  $\re p=\gamma_\pm$.

Solving transcendental equations \eqref{eq-25}, we can obtain the singular points of the conormal symbol, that is the values of the weight for which the problem is not Fredholm. 

\begin{remark}
Note that the ellipticity condition can be effectively verified numerically: since Eq.~\eqref{eq-25} has no solutions in the vertical stripe  $|\re p|<N$  provided  $|p|^2+n^2$ is large,  all the solutions in the mentioned stripe are associated with a finite number of   $n$'s and a bounded domain of  $p$'s.
\end{remark}

\section{Example: $\mathbb{Z}^2$-operators on half-line}

Let us treat the half-line $\mathbb{R}_+$ as a manifold with two conical points and consider the operator
\begin{equation}\label{eq-aa5}
A =1+aT_\alpha+b T_\beta: H^{s,\gamma}(\mathbb{R}_+)\longrightarrow H^{s,\gamma}(\mathbb{R}_+), \qquad (T_\alpha u)(r)=u(re^{\alpha}),
(T_\beta u)(r)=u(re^{\beta}). 
\end{equation}
Here we choose weights $\gamma=(\gamma_+,\gamma_-)$  at $r=0$ and  $r=\infty$, correspondingly, and  suppose that the numbers  $\alpha,\beta$ are {\em incommensurable}. Let us     assume for definiteness that $\alpha,\beta>0$, $\gamma_+>\gamma_-$.

Let us obtain conditions under which the operator \eqref{eq-aa5} is Fredholm. For simplicity we set $s=0$ and apply Mellin transform \eqref{eq-mel2}. We obtain the operator
\begin{equation}\label{eq-c3}
\sigma_c(A)(p)=1+ae^{p\alpha}+be^{p\beta}:\mathcal{O}(S)\longrightarrow \mathcal{O}(S),
\end{equation}
where  $S=\{\gamma_-<\re p< \gamma_+\}$ is the vertical stripe in the complex plane, while  $\mathcal{O}(S)$ is the Mellin transform of the space $L^{2,\gamma}(\mathbb{R}_+)$. The elements of this space admit  holomorphic  continuation inside  this stripe (see  \cite{Ster7}).

One can show that the operator \eqref{eq-c3} is invertible in the space  $\mathcal{O}(S)$ if and only if the function
\begin{equation}\label{eq-61}
1+ae^{\gamma\alpha}e^{i\varphi}+be^{\gamma \beta} e^{i\psi}
\end{equation}
is nonzero for all $\gamma\in[\gamma_-,\gamma_+],$ $\varphi,\psi\in[0,2\pi]$. It is easy to write out the invertibility condition for the latter function explicitly:
given some  $\gamma$, the sum of the first two terms runs over the circle in the complex $z$-plane with center at  $z=1$ and radius  $|a|e^{\gamma\alpha}$, while the third summand runs over the circle with center at  $z=0$ and radius $|b|e^{\gamma\beta}$. Clearly, the function  \eqref{eq-61} is invertible if and only if  the mentioned circles do not intersect each other. Further, the intersection of two circles is nonempty if and only if the radii of the circles and the distance  between their centers satisfy   triangle inequalities  (see Figure~\ref{fig2a}):
\begin{equation}\label{eq-62}
 |a|e^{\alpha\gamma}\le 1+|b|e^{\beta\gamma},\quad |b|e^{\beta\gamma}\le 1+|a|e^{\alpha\gamma},\quad 1\le |a|e^{\alpha\gamma}+|b|e^{\beta\gamma}.
\end{equation}
\begin{figure}
 \begin{center}
  \includegraphics[width=6cm
 ]{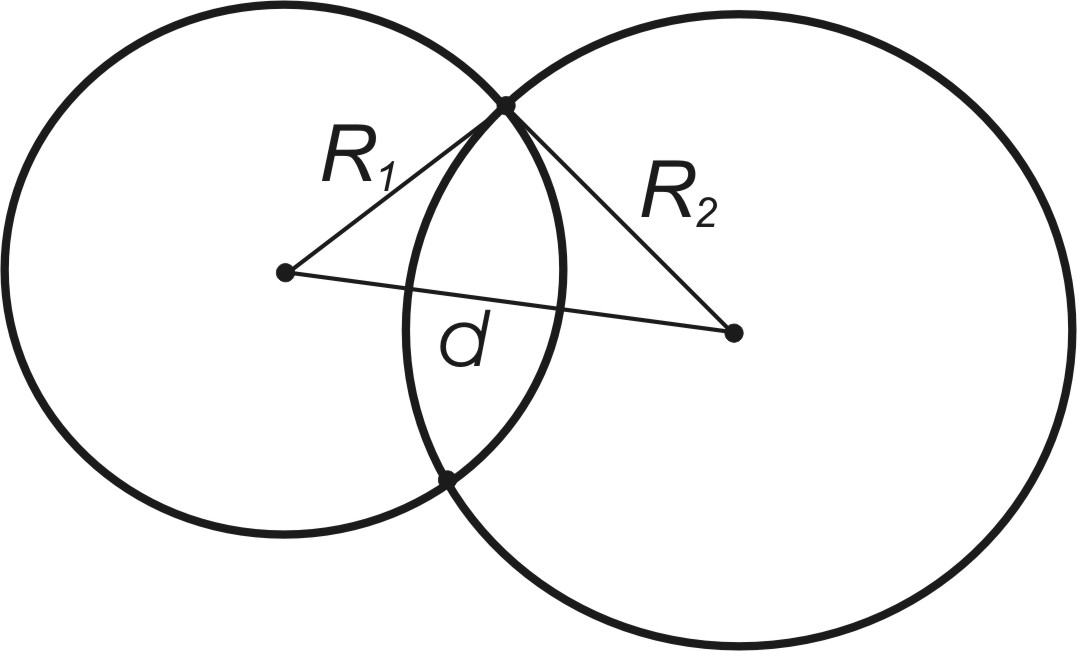} \end{center}\caption{Intersection of two circles.}\label{fig2a}
\end{figure}
More precisely, the intersection of two circles is nonempty if and only if all  three inequalities are valid. So, our operator is elliptic if for all $\gamma\in[\gamma_-,\gamma_+]$ one of the mentioned inequalities is not valid. For fixed  $\gamma$ the domain, in which one of inequalities is not valid, is shown  in Figure~\ref{fig2b} (the domains, in which inequality one, two, three is not valid is denoted by I, II, III, correspondingly). Further, the intersection of all such domains over all  $\gamma\in[\gamma_-,\gamma_+]$ is shown in Figure~\ref{fig3a} (we omit the corresponding elementary computations). 
\begin{figure}
 \begin{center}
  \includegraphics[width=13cm
 ]{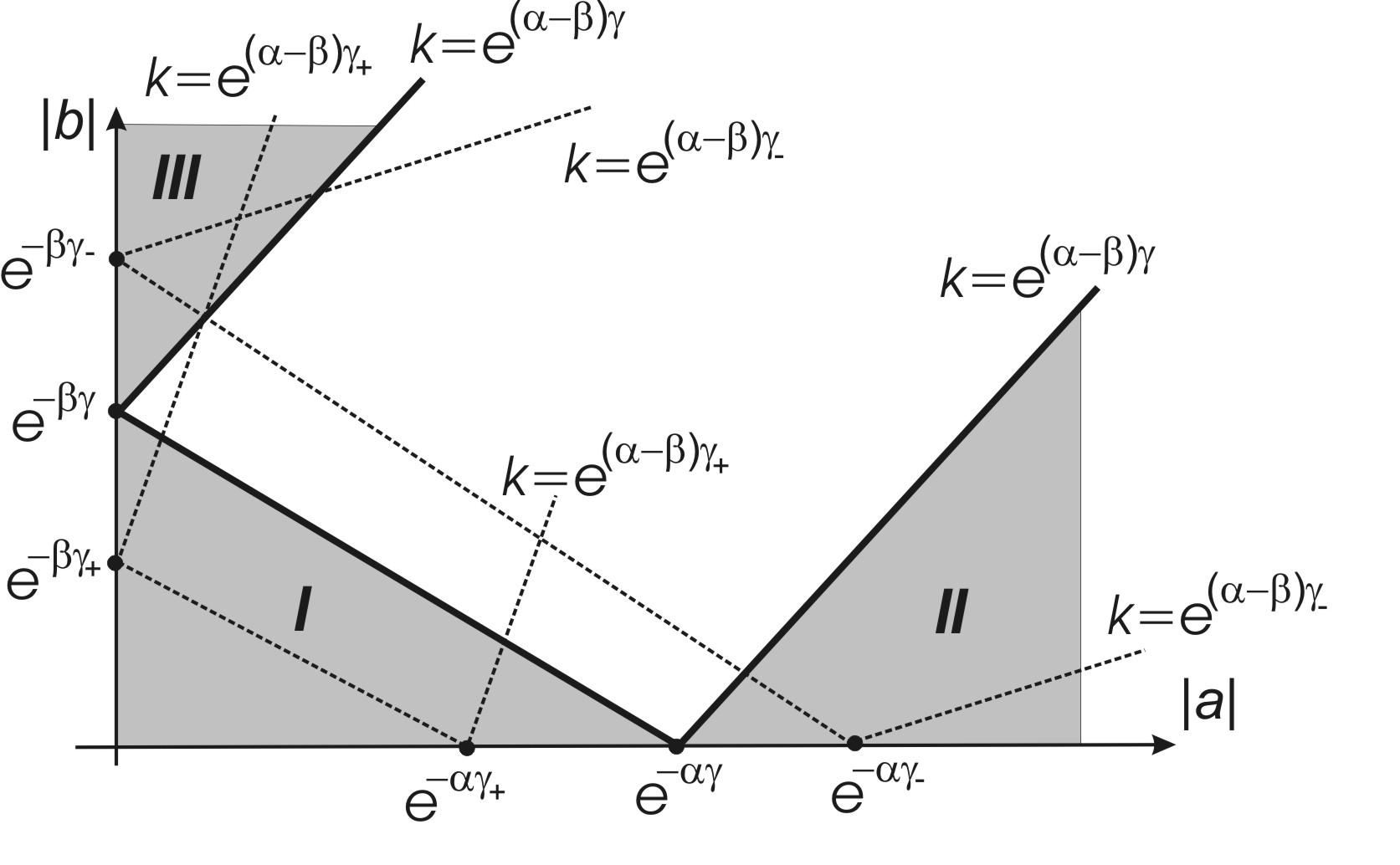} \end{center}\caption{Values of $a,b$, for which circles $|z|=|b|e^{\beta\gamma}$ and $|z-1|=|a|e^{\alpha\gamma}$ do not intersect.}\label{fig2b}
\end{figure}

We see that the domain of parameters $a,b$, for which the operator is invertible, consists of three components. The meaning of this decomposition into three components is simple: the operator  $A$ is equal to the sum of three terms and this operator is invertible if and only if one of the terms is dominating  (the first inequality corresponds to the domination of the identity operator, the second corresponds to  $aT_\alpha$, the third corresponds to $bT_\beta$). 
 
\begin{figure}
 \begin{center}
  \includegraphics[width=13cm
 ]{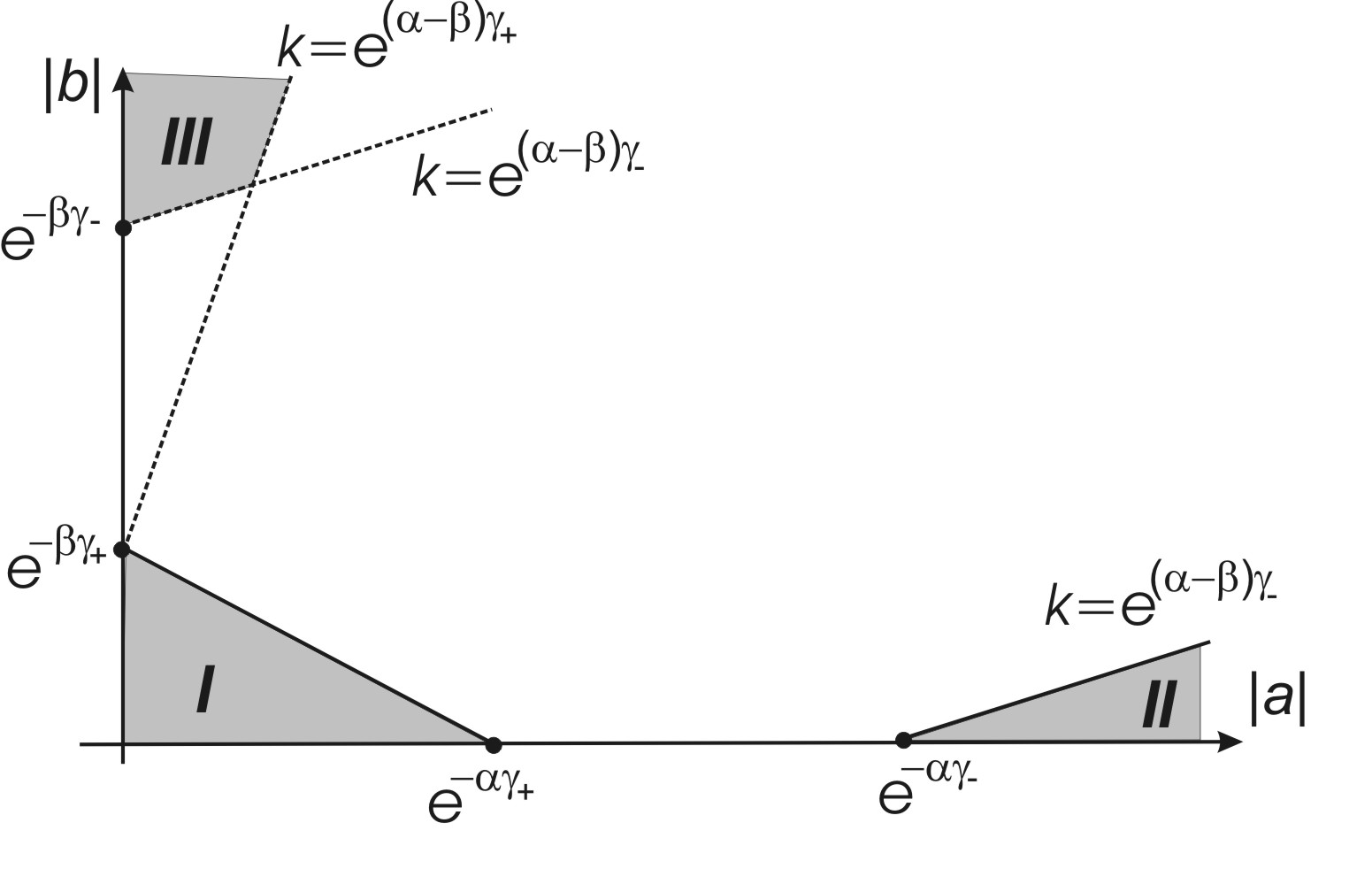} \end{center}\caption{Domain, in which the operator \eqref{eq-c3} is invertible.}\label{fig3a}
\end{figure}

\section{Proofs of the main results ($C^*$-theory)}\label{sec-c1}

The aim of this section is to prove Theorem~\ref{th-finite1}, i.e., to obtain Fredholm property of elliptic  $G$-operators.  As usual, the Fredholm property is proved by constructing an almost-inverse operator, i.e., inverse up to compact operators. Clearly,  given   $G$-operator   \eqref{eq-op12a} (where the sum is actually finite), a similar sum for an almost-inverse operator is (as a rule) infinite and for this reason one has to work with $G$-operators equal to infinite sums. Unfortunately, a direct consideration of such sums in the general situation, which we consider, looks quite difficult. However, there is a general approach, which enables one to overcome difficulties, related with infinite series. This approach is based on systematic use of the language of  $C^*$-algebras and their crossed products (see \cite{AnLe1}; below we follow the exposition in \cite{SaSt33}).

\paragraph{$C^*$-algebra of $\psi$DO $\Psi(M)$.}

Let us briefly recall the structure of the  $C^*$-algebra
$$
\Psi(M)\subset \mathcal{B}L^{2}(M),
$$
of  zero order $\psi$DOs on a manifold $M$ with conical singularities acting on the space $L^{2}(M)\equiv H^{0,0}(M)$, that is  $s=\gamma=0$. 

This algebra can be  obtained (e.g., see \cite{NaSaSt5}) as the norm closure of zero order operators with smooth interior symbols and smooth conormal symbols.  This algebra is endowed  with  the symbol mapping 
$$
\sigma=(\sigma_0,\sigma_c): \Psi(M) \longrightarrow C(S^*M)\oplus \Psi_p(\Omega ) 
$$
which takes a $\psi$DO $D$ to its interior symbol  $\sigma_0(D)$ (continuous function on the cosphere bundle $S^*M=T^*_0M/\mathbb{R}_+$) and conormal symbol $\sigma_c(D)$ (family of $\psi$DOs on the base of the cone $\Omega$, continuously depending on parameter $p\in L=\{\re p=0\}$). Here the interior symbol and the conormal symbol are subject to compatibility condition: the restriction of the interior symbol to the boundary  $\partial S^*M$ is equal to the symbol of the conormal symbol, which is considered as a parameter-dependent family. Thus elements of the algebra 
$\Psi(M)$ can be treated as $\psi$DOs on $M$ with continuous symbols.

\paragraph{$G$-operators and crossed products.} 

$G$-action on $M$ induces the unitary representation of $G$ on $L^{2 }$:
$$
\widetilde{T}_g:L^{2 }(M)\longrightarrow L^{2 }(M),\quad g\in G.
$$
$$
\widetilde{T}_g= \mu ^{-1/2} T_g \mu ^{1/2}\equiv 
    \left(\frac{{g^{-1}}^*\mu }{\mu }\right)^{1/2} T_g,
$$
where $\mu=\mu_0$ is the measure, which is used in the definition of the space  $L^2(M)$.

The group  $G$ acts on $\Psi(M)$ on the left as 
$$
A\in \Psi(M), g\in G \longmapsto  \widetilde{T}_g A \widetilde{T}_g^{-1}\in \Psi(M)
$$
and one defines the maximal  $C^*$-crossed product  $\Psi(M)\rtimes G$ (concerning the theory of crossed products see, e.g.,  \cite{Ped1}). Elements of the crossed product are families  $\{D_g\}$ of $\psi$DOs on $M$, parametrized by the group. A family is called finite, if it has only finitely many nonzero elements. The set of finite families is denoted by  $\Psi(M)\rtimes_{alg} G$ and called the {\em algebraic} crossed product. We shall use the following universal property of the crossed product  $A\rtimes G$ of algebra  $A$ and group $G$, acting on  $A$ by automorphisms: an arbitrary covariant representation, i.e., a pair of representations 
$$
\rho_0:A\to \mathcal{B}H, \rho_1:G\to \mathcal{B}H \text{ such that }\rho_0(g(a))=\rho_1(g)\rho_0(a)\rho_1(g)^{-1},
$$ 
(the representation $\rho_1$ is assumed to be unitary) induces a homomorphism of the crossed product:
$$
\rho: A\rtimes G \longrightarrow \mathcal{B}H,
$$
which on the elements of the algebraic crossed product has the form 
$$
\rho\{a_h\}=\sum_{h\in G }  \rho_0(a_h)\rho_1(h).
$$
We now define a mapping, which takes a finite family  $\{D_h\}$ to the $G$-operator
$$
 \sum_h D_h \widetilde{T}_h: L^{2 }(M) \longrightarrow L^{2 }(M).
$$ 
By the universal property this mapping extends to the $C^*$-homomorphism
$$
\begin{array}{ccc}
  \pi: \Psi(M)\rtimes G & \longrightarrow & \mathcal{B}  / \mathcal{K} \vspace{2mm}\\
    \{D_h\} & \longmapsto  &\sum_h D_h \widetilde{T}_h.
\end{array}
$$
Here  $\mathcal{B}$ ($\mathcal{K}$) stands for the algebra of bounded (compact) operators acting on  $L^{2 }(M)$. 

Our aim in this section is to study Fredholm property for  $G$-operators. Obviously, the Fredholm property of a $G$-operator is equivalent to the invertibility of the corresponding element in the Calkin algebra  $\mathcal{B}/\mathcal{K}$, i.e.,  invertibility of the element  $\pi(a)$ for $a\in \Psi(M)\rtimes G$. To study the invertibility problem, we introduce the notion of symbol for $G$-operators and the corresponding quantization. 

Since the conormal symbol of a $G$-operator is a family of  $G$-operators with parameter (see~\eqref{eq-op12}), let us study this notion.

\paragraph{$G$-operators with parameter.} 

$G$ acts on the cosphere bundle  $S^*M=T^*_0M/\mathbb{R}_+$ and the boundary of this bundle, denoted by $\partial S^*M$, is invariant with respect to this action. Hence, we can define the crossed product  $C(\partial S^*M)\rtimes G$. 
Since the conical point has a punctured  neighborhood diffeomorphic to the product $\Omega\times \mathbb{R}_+$, one has a diffeomorphism $\partial S^*M\simeq S(T^*\Omega\times\mathbb{R}_p)$.

Denote by 
$$
\Psi_p(\Omega)\subset C(L ,\mathcal{B}L^2(\Omega)),\qquad\text{ where } L =\{\re p=0\}, 
$$  
the norm closure of the algebra of classical zero-order $\psi$DOs  on $\Omega$ with parameter $p\in L $ (e.g., see  \cite{NaSaSt5}).  Here  $C(L ,\mathcal{B}L^2(\Omega))$ stands for the algebra of continous bounded functions on the weight line  $L $ ranging in bounded operators on  $L^2(\Omega)$. 
\begin{definition}
{\em $G$-symbol with parameter} $p$ is an element
$$
 a\in C(S(T^*\Omega\times \mathbb{R}_p))\rtimes G.
$$ 
\end{definition}
Let us define  quantization  of $G$-symbols with parameter by the formula
\begin{equation}\label{eq-opp1}
Op(a)= \sum _h Op(a_h)(p)\widetilde{T}'_h e^{p\beta_h}:L^2(\Omega)\longrightarrow L^2(\Omega),\text{ where }a=\{a_g\},\qquad p\in L ,
\end{equation}
and $\widetilde{T}'_h$ is the  unitary representation of $G$ on $L^2(\Omega)$ by shift operators on $\Omega$.
The formula \eqref{eq-opp1} defines  a quantization mapping on the algebraic crossed product, which extends by the universal property to the  $C^*$-crossed product as a  $C^*$-homomorphism denoted by
$$
Op: C(S(T^*M\times \mathbb{R}_p))\rtimes G\longrightarrow C\bigl(L ,\mathcal{B}\bigr)/C_0\bigl(L ,\mathcal{K}\bigr).
$$
Here  $C_0\bigl(L ,\mathcal{K}\bigr)$ is the ideal of continuous compact-valued functions vanishing at infinity. 

\begin{definition}
A {\em parameter-dependent family of $G$-operators} is a family of operators
$$
 A(p)\in C\bigl(L ,\mathcal{B}\bigr),
$$  
such that
$A(p)-Op(a)\in C_0(L ,\mathcal{B}/\mathcal{K})$
for some symbol $a\in C(S(T^*M\times \mathbb{R}_p))\rtimes G$.
\end{definition}
An element $a$ will be called the  {\em symbol of parameter-dependent family} and denoted by $\sigma(A(p))$.

\begin{definition}
The algebra of  {\em parameter dependent families of $G$-operators }   is defined as
\begin{equation}\label{eq-param1}
\Psi^G_p(\Omega)= \Bigl\{(a, A(p))\in C(\partial S^*M)\rtimes G\oplus C(L ,\mathcal{B}L^2(\Omega)) \;|\; 
Op(a)-A(p)\in C_0(L ,\mathcal{K})\Bigr\}.
\end{equation}
\end{definition}

By construction we have the short exact sequence of  $C^*$-algebras
\begin{equation}\label{eq-phi3}
\begin{array}{rccccl}
0\longrightarrow C_0(L ,\mathcal{K}) & \longrightarrow & \Psi^G_p(\Omega) & \longrightarrow & C(\partial S^*M)\rtimes G & \longrightarrow 0 \vspace{1mm}\\
A(p) &  \longmapsto &  (0,A(p)) \vspace{1mm}\\
 & & (a,A(p)) & \longmapsto & a
\end{array}
\end{equation}
Since the sequence  \eqref{eq-phi3} is exact, we obtain   the following corollary.
\begin{corollary}\label{cor-7}
If a parameter-dependent family  $A(p)$  is elliptic, i.e., the symbol  $\sigma(A(p))\in C(\partial S^*M)\rtimes G$  is invertible, then this family is Fredholm for all  $p\in L$  and invertible at infinity.
\end{corollary}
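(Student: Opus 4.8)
The plan is to read the conclusion of Corollary~\ref{cor-7} directly off the short exact sequence~\eqref{eq-phi3}, whose exactness is taken for granted by construction. The statement asserts two things: that an elliptic parameter-dependent family $A(p)$ is Fredholm for each fixed $p\in L$, and that it is invertible at infinity. I would separate these two assertions, since they use different parts of the sequence.

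For the \emph{Fredholm property at each fixed $p$}, the hypothesis is that the symbol $\sigma(A(p))=a\in C(\partial S^*M)\rtimes G$ is invertible. By the universal property of the crossed product, $Op$ is a $C^*$-homomorphism, so it carries the invertible element $a$ to an invertible element $Op(a)$ in the quotient $C(L,\mathcal{B})/C_0(L,\mathcal{K})$. Concretely, there exists a symbol $b=a^{-1}$ with $Op(b)\,Op(a)-1, \;Op(a)\,Op(b)-1\in C_0(L,\mathcal{K})$. Since $A(p)-Op(a)\in C_0(L,\mathcal{K})$ by the definition of a parameter-dependent family, the products $Op(b)A(p)-1$ and $A(p)Op(b)-1$ also lie in $C_0(L,\mathcal{K})$. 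Evaluating at a fixed $p\in L$, the difference $Op(b)(p)A(p)-1$ is a compact operator on $L^2(\Omega)$, and likewise on the other side, so $A(p)$ is invertible modulo $\mathcal{K}$, i.e.\ Fredholm on $L^2(\Omega)$.

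For \emph{invertibility at infinity}, I would use the extra vanishing built into the ideal $C_0(L,\mathcal{K})$. From the previous step the remainders $Op(b)A(p)-1$ and $A(p)Op(b)-1$ belong to $C_0(L,\mathcal{K})$, so their operator norms tend to $0$ as $|p|\to\infty$ along $L$. Hence for $|p|$ sufficiently large both remainders have norm $<1$, and a Neumann series argument shows that the one-sided inverses $Op(b)(p)$ become genuine two-sided inverses of $A(p)$; thus $A(p)$ is literally invertible for all large $|p|$. This is exactly what ``invertible at infinity'' means.

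I expect the only real subtlety to be the passage from invertibility in the quotient $C(L,\mathcal{B})/C_0(L,\mathcal{K})$ to pointwise statements: one must observe that evaluation at a point $p$ is a $C^*$-homomorphism sending the ideal $C_0(L,\mathcal{K})$ into $\mathcal{K}$, which legitimizes reading off the Fredholm property fiberwise, and that membership in $C_0(L,\mathcal{K})$ is precisely the decay-at-infinity statement needed for the Neumann series. Both points are routine once the exactness of~\eqref{eq-phi3} and the $C^*$-homomorphism property of $Op$ are in hand, so the proof is essentially a formal consequence of the algebraic framework already set up.
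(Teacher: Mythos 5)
Your proof is correct and takes essentially the same route as the paper, which compresses the whole argument into the remark that the sequence \eqref{eq-phi3} is exact: invertibility of the symbol passes through the unital $C^*$-homomorphism $Op$ to invertibility modulo the ideal $C_0(L,\mathcal{K})$, whose pointwise evaluation yields Fredholmness at each $p\in L$ and whose vanishing at infinity yields genuine invertibility for large $|p|$ by a Neumann series. The only cosmetic point is that $Op(b)$ is a class in the quotient $C(L,\mathcal{B})/C_0(L,\mathcal{K})$, so one should fix a representative $B(p)\in C(L,\mathcal{B})$ before evaluating at a point, which you do implicitly.
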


\paragraph{Symbols of $G$-operators on manifold $M$.}  

Consider a  $G$-operator equal to a finite sum
\begin{equation}\label{eq-ds1}
D=\sum_h D_h \widetilde T_h:L^{2 }(M)\longrightarrow L^{2 }(M)
\end{equation}
on $M$. Let us define its symbol  $\sigma(D)=(\sigma_0(D),\sigma_c(D))$  as the pair
\begin{equation}\label{eq-simbol1}
 \Bigl(\{\sigma_0(D_g)\}, \sum_h\sigma_c(D_h)\widetilde T'_h e^{ p\beta_h}     \Bigr)\in C(S^*M)\rtimes G \oplus \Psi_p^G(\Omega).
\end{equation}
Here the interior symbol is denoted by $\sigma_0(D)$ and the conormal symbol by $\sigma_c(D)$. 

The algebra of symbols   is defined as a subalgebra in the direct sum 
$$
\Sigma\subset C(S^*M)\rtimes G \bigoplus \Psi_p^G(\Omega),
$$
defined by the compatibility condition
$$
\Sigma=\Bigl\{(a_0,a_c)\;\Bigl|\; a_0\bigl|_{\partial S^*M}=\sigma(a_c)\Bigr\}.
$$
Here  $a_0|_{\partial S^*M}\in C(\partial S^*M)\rtimes G$ stands for the restriction of the interior symbol to the boundary, and  $\sigma(a_c)\in C(\partial S^*M)\rtimes G$ is the symbol of the parameter-dependent family  (see above).

Now we can define the symbol mapping as the homomorphism of algebras:
$$
\sigma:\Psi(M)\rtimes G \longrightarrow \Sigma, 
$$
which extends the mapping \eqref{eq-simbol1}. This symbol mapping is well defined by the universal property of the crossed product. 

\paragraph{Quantization of symbols.}

Let us define the quantization mapping
\begin{equation}\label{eq-quant1}
Op: \Sigma  \longrightarrow \mathcal{B}/\mathcal{K},
\end{equation}
which takes a symbol to a certain operator.  

To construct the mapping \eqref{eq-quant1}, consider an element $a=(a_0,a_c)\in\Sigma$, whose first component lies in the algebraic crossed product. Since the elements $(a_0)_g$ are nonzero only for finitely many diffeomorphisms 
$g\in G$,  there exist neighborhoods  $U,V$ of the conical point such that $g(U)\subset V$ and in $U$ all such diffeomorphisms have the form  \eqref{eq-diff1}. Let  $\varphi,\psi$ be smooth functions on $M$, which are identically equal to zero outside  $U$ and $V$, respectively, and are identically equal to 1 in a smaller neighborhood of the conical point.

The symbol $a$ is quantized  in two steps.

First, we use the Mellin transform $\mathcal{M}_{r\to p}$ to define the operator  
$$
\widehat{a}_c=\psi a_c\left( -r\frac{\partial}{\partial r} \right)\varphi\equiv
\psi \mathcal{M}^{-1}_{p\to r} a_c(p)\mathcal{M}_{r\to p}\varphi 
$$
with conormal symbol equal to $a_c$ (this fact is verified directly). 

Second, by the compatibility condition, the interior symbol  $a_0$  and the interior symbol $\sigma_0(\widehat{a}_c)$ of the operator  $\widehat{a}_c$ are equal over  $\partial S^*M$. Hence
\begin{equation}\label{eq-null1}
a_0-\sigma_0\left(\widehat{a}_c\right)\in C_0(S^*M\setminus \partial S^*M)\rtimes G,
\end{equation}
i.e., this difference is a symbol vanishing over $\partial S^*M$. Therefore, to the interior symbol \eqref{eq-null1} we naturally assign an operator on  $M$ with  zero  conormal symbol. Denote this operator by $Op\bigl(a_0-\sigma_0\left(\widehat{a}_c\right)\bigr)$. 
We finally define
\begin{equation}\label{eq-opa1}
Op(a)=Op\bigl(a_0-\sigma_0\left(\widehat{a}_c\right)\bigr)+\widehat{a}_c\in \mathcal{B}/\mathcal{K}.
\end{equation}
It is easy to show that the operator  $Op(a)$ modulo compact operators does not depend on the choice of cut-off functions  $\varphi$ and $\psi$.

\paragraph{Main theorem.}

\begin{theorem}\label{th-cstar1}
\begin{enumerate}
\item The mapping $Op$ (see \eqref{eq-opa1}) extends by continuity to a homomorphism of  $C^*$-algebras and one has a commutative triangle of $C^*$-algebras and their homomorphisms
\begin{equation}\label{eq-triang1}
 \xymatrix{
  \Psi(M)\rtimes G \ar[rr]^\pi \ar[dr]_{\sigma} & & \mathcal{B}/\mathcal{K}\\
   & \Sigma\ar[ru]_{Op}&  
  }
\end{equation}
\item (Fredholm property) Suppose that an element  $A=\{A_g\}\in \Psi(M)\rtimes G$ has invertible symbol $\sigma(A)\in \Sigma$.  Then the corresponding $G$-operator
$$
 \sum_{h\in G} A_h \widetilde T_h:L^{2 }(M)\to L^{2 }(M)
$$ 
is Fredholm.
\end{enumerate}
\end{theorem}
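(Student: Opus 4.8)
The statement concerns the $L^2$-algebra ($s=\gamma=0$); recall that a $G$-operator is Fredholm exactly when its class $\pi(A)\in\mathcal{B}/\mathcal{K}$ is invertible, so the plan is to deduce part~2 immediately from part~1. Indeed, once the triangle \eqref{eq-triang1} commutes, an element $A=\{A_g\}$ with $\sigma(A)$ invertible in $\Sigma$ satisfies $\pi(A)=Op(\sigma(A))$, and since $Op$ is a unital homomorphism of $C^*$-algebras it carries the invertible element $\sigma(A)$ to an invertible element of $\mathcal{B}/\mathcal{K}$; hence $\sum_h A_h\widetilde T_h$ is Fredholm. Thus all the weight lies in part~1, and I would realize the three maps inside an exact-sequence picture rather than verify $\pi=Op\circ\sigma$ by brute force.

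The backbone is the short exact sequence of $C^*$-algebras
$$
0\longrightarrow \mathcal{K}\rtimes G\longrightarrow \Psi(M)\rtimes G\stackrel{\sigma}{\longrightarrow}\Sigma\longrightarrow 0,
$$
obtained by applying the \emph{maximal} crossed-product functor $-\rtimes G$ to the known symbol sequence $0\to\mathcal{K}\to\Psi(M)\to\Sigma_0\to 0$ of the conical $\psi$DO algebra, where $\Sigma_0\subset C(S^*M)\oplus\Psi_p(\Omega)$ is the algebra of compatible symbol pairs without the group. Two facts feed into this: the maximal crossed product is an exact functor and preserves surjections, so the crossed sequence stays exact with kernel $(\ker\sigma)\rtimes G=\mathcal{K}\rtimes G$; and the crossed symbol algebra $\Sigma_0\rtimes G$ is canonically isomorphic to $\Sigma$. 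For the latter one checks that $(-)\rtimes G$ commutes with the fibre product defining the compatibility condition and that $\Psi_p(\Omega)\rtimes G\cong \Psi^G_p(\Omega)$; this last isomorphism, together with Corollary~\ref{cor-7}, is exactly what makes the parameter-dependent component of the symbol behave well under crossing with $G$.

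Granting the sequence, I would build the triangle as follows. First, $\pi$ annihilates the ideal $\mathcal{K}\rtimes G$: on the algebraic crossed product every element is a finite sum $\sum_h K_h\widetilde T_h$ with $K_h$ compact, and a compact operator times the unitary $\widetilde T_h$ is compact, so $\pi$ sends this dense subalgebra, hence all of $\mathcal{K}\rtimes G$, into $\mathcal{K}$. Consequently $\pi$ factors through the quotient $(\Psi(M)\rtimes G)/(\mathcal{K}\rtimes G)\cong\Sigma$, yielding a genuine $C^*$-homomorphism $q\colon\Sigma\to\mathcal{B}/\mathcal{K}$ with $\pi=q\circ\sigma$. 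It then remains to identify $q$ with the explicitly defined quantization $Op$. For this I would first confirm that $Op(a)$ is independent of the cut-offs $\varphi,\psi$ (routine), and then compute, for a finite family $A=\{A_h\}$, that $Op(\sigma(A))$ and $\pi(A)=\sum_h A_h\widetilde T_h$ have the same interior symbol $\{\sigma_0(A_h)\}$ and the same conormal symbol $\sum_h\sigma_c(A_h)\widetilde T'_h e^{p\beta_h}$: the Mellin step reproduces the operator near the conical point modulo compacts, while the interior correction term $Op(a_0-\sigma_0(\widehat a_c))$ repairs the symbol away from the boundary. Since two elements with equal symbol differ by a compact, $Op(\sigma(A))=\pi(A)=q(\sigma(A))$ on the algebraic part; as this part is dense and both maps are continuous, $q=Op$. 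This simultaneously shows that $Op$ extends to a $C^*$-homomorphism and that the triangle commutes.

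The hard part is the exact-sequence step—specifically the identification $\Sigma_0\rtimes G\cong\Sigma$ and, through it, surjectivity of $\sigma$ onto all of $\Sigma$. Exactness of the maximal crossed product is available for an arbitrary discrete $G$ (which is precisely why one must work with the maximal, not the reduced, crossed product), so the kernel comes out correctly as $\mathcal{K}\rtimes G$ once surjectivity is known; surjectivity itself can be read off from the $Op$-construction, since the range of a $*$-homomorphism is closed and the symbols of finite $G$-operators are dense in $\Sigma$. The genuinely delicate point is matching the compatibility condition defining $\Sigma$ with the one inherited from $\Sigma_0$ after crossing with $G$, which is exactly where the parameter-dependent family algebra $\Psi^G_p(\Omega)$ and Corollary~\ref{cor-7} must be brought to bear.
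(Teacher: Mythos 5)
Your reduction of part~2 to part~1, and your identification of $Op$ with the factored map on the algebraic part by comparing interior and conormal symbols, both match the paper (the paper's step~4 even exhibits the inverse explicitly as $Op(\sigma(A)^{-1})$, and its step~1 is exactly your symbol-comparison computation). The gap is in your structural backbone: the claimed short exact sequence $0\to\mathcal{K}\rtimes G\to\Psi(M)\rtimes G\stackrel{\sigma}{\to}\Sigma\to 0$ is false, because the identification $\Sigma_0\rtimes G\cong\Sigma$ --- and in particular $\Psi_p(\Omega)\rtimes G\cong\Psi_p^G(\Omega)$ --- does not hold. Exactness of the full crossed product (which is indeed valid for any discrete $G$) gives quotient $\Sigma_0\rtimes G$, not $\Sigma$. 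The natural map $\Psi_p(\Omega)\rtimes G\to\Psi_p^G(\Omega)$, $\{A_h(p)\}\mapsto\bigl(\{\sigma(A_h)\},\sum_h A_h(p)\widetilde T'_h e^{p\beta_h}\bigr)$, is surjective but has a large kernel: the ideal of $\Psi_p^G(\Omega)$ in the sequence \eqref{eq-phi3} is $C_0(L,\mathcal{K})$, whereas the ideal of $\Psi_p(\Omega)\rtimes G$ is $C_0(L,\mathcal{K})\rtimes G$, and the parameter-dependent factors $e^{p\beta_h}$ collapse the latter onto the former. Concretely, if $G=\mathbb{Z}$ acts trivially on $\Omega$ with $\beta\neq 0$, then $C_0(L,\mathcal{K})\rtimes\mathbb{Z}\cong C_0(L,\mathcal{K})\otimes C(\mathbb{S}^1)$, and quantization evaluates the circle variable at the point $e^{p\beta}$ depending on $p\in L$ --- far from injective. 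This is precisely the phenomenon the paper flags in its introduction: the conormal symbol \emph{is not} an element of a crossed product, so the standard crossed-product technique does not apply at the conical point.

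Consequently $\ker\sigma\supsetneq\mathcal{K}\rtimes G$, and your factorization argument only produces a homomorphism $q$ on $\Sigma_0\rtimes G$; to descend $q$ to $\Sigma$ you would need to know that $\pi$ annihilates the kernel of $\Sigma_0\rtimes G\to\Sigma$, which is essentially equivalent to the theorem itself, so the argument is circular at exactly the delicate point you yourself identify at the end. The paper avoids this by never invoking a crossed-product description of $\Sigma$: it constructs $Op$ directly on symbols via the two-step quantization (Mellin quantization of $a_c$ with cutoffs, plus an interior correction term vanishing over $\partial S^*M$), verifies $\pi(A)=Op(\sigma(A))$ on finite families by the direct computation \eqref{eq-long1}, proves multiplicativity of $Op$ on $\Sigma_{alg}$ by lifting symbols through the surjection $\sigma$ and using $\pi(AB)=\pi(A)\pi(B)$, and then extends by continuity from the dense subalgebra $\Sigma_{alg}$. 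Your proof would be repaired by replacing the exact-sequence step with this lifting argument; as written, the key isomorphism it rests on fails.
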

\begin{proof}
1. Let us first prove that the diagram  \eqref{eq-triang1} is commutative in the algebraic case, i.e., given   $A=\{A_g\}\in \Psi(M)\rtimes_{alg} G$  with symbol $\sigma(A)$, let us show that  $\pi(A)=Op(\sigma(A))$.

On the one hand, we have
\begin{equation}
\pi(A)=\sum_h A_h\widetilde T_h.
\end{equation}
On the other hand, the symbol of $A$ is equal to 
$$
\sigma(A)=
\left(
  \{\sigma_0(A_h)\},\sum_h \sigma_c(A_h)(p)\widetilde T'_h e^{ p\beta_h}
\right),
$$ 
where $\sigma_0(A_g)$, $\sigma_c(A_g)$ are the interior and the conormal symbols of  $A_g$. Quantization of this symbol gives the operator 
\begin{multline}\label{eq-long1}
Op(\sigma(A))=\\
=\sum_h Op\left(\sigma_0(A_h)-\sigma_0\left(\psi \sigma_c(A_h)\left(-r\frac\partial{\partial r}\right)  h^*  \varphi\right)  \right)\widetilde T_h+ \sum_h \psi \sigma_c(A_h)\left(-r\frac\partial{\partial r}\right)\widetilde T_h  \varphi=\\
= \sum_h \left[   Op\left(\sigma_0(A_h )-\sigma_0\left(\psi \sigma_c(A_h)\left(-r\frac\partial{\partial r}\right)  h^*  \varphi\right)  \right)+\psi \sigma_c(A_h)\left(-r\frac\partial{\partial r}\right)  h^*  \varphi\right] \widetilde T_h=\\
=\sum_h A_h \widetilde T_h.
\end{multline}
Here the last equality modulo compact operators follows from the fact that the operator in square brackets in  \eqref{eq-long1} and the operator $A_g$ have equal interior and conormal symbols (this is proved by a direct computation) and, hence, coincide up to compact operators.

2. Let us now show that  $Op$ is a homomorphism on the subalgebra $\Sigma_{alg}$, consisting of symbols  $a=(a_0,a_c)$, where $a_0$ is in the algebraic crossed product. Indeed, given $a,b\in \Sigma_{alg}$,
we have  $a=\sigma(A)$, $b=\sigma(B)$ for some elements $A,B\in \Psi(M)\rtimes_{alg} G$
(since the symbol mapping  $\sigma$ is surjective). Further, we have
$$
Op(ab)=Op(\sigma(A)\sigma(B))=Op(\sigma(A B))=\pi(AB)=\pi(A)\pi(B)= Op(a)Op(b).
$$
This gives the desired property.

3.  It follows from  item 2. that  $Op$   extends by the universal property of the crossed product from the dense subalgebra  $\Sigma_{alg}\subset \Sigma$ to the entire algebra $\Sigma$. For this extension the diagram  \eqref{eq-triang1} remains well defined and commutative. 

4. To prove the Fredholm property, it suffices to show that  $\pi(A)$ is invertible. We claim that the inverse element is equal to  $Op(\sigma(A)^{-1})$. Indeed, we have
$$
\pi(A)\cdot Op(\sigma(A)^{-1})= Op(\sigma(A))Op(\sigma(A)^{-1})=Op(1)=1.
$$
Here the first equality follows from the commutativity of diagram  \eqref{eq-triang1}, while the second follows from the fact that  $Op$ is a homomorphism of algebras. 

The equality
$$
Op(\sigma(A)^{-1})\cdot \pi(A)=  1
$$
is proved similarly.

The proof of the theorem is now complete.
\end{proof}

\paragraph{Invertibility of symbols.}

By Theorem~\ref{th-cstar1}  a $G$-operator is Fredholm, if its symbol is invertible as an element of a certain $C^*$-algebra. How to check this invertibility? The following proposition shows that the invertibility condition in Theorem~\ref{th-cstar1} can be equivalently written as invertibility of certain explicitly written operators. To write out these conditions, we define a special set of representations
$$
\begin{array}{ccc}
   \pi_{x,\xi}:C(S^*M)\rtimes G &\lra &\mathcal{B}l^2(G)\vspace{2mm}\\
   f &\longmapsto & \sum_h f(g^{-1}(x),\partial g^{-1}(\xi),h)\mathcal{T}_h
\end{array}
$$
of the crossed product in the algebra of bounded operators acting on the standard space $l^2(G)$ on the group (cf. \eqref{traj-symbol1}). Given  $(x,\xi)\in S^*M$, this representation takes an element of the crossed product to its restriction on the orbit of   $(x,\xi)$.  
\begin{proposition}\label{prop-5}
The symbol 
$$
 \sigma(A)=(\sigma_0(A),\sigma_c(A))\in C(S^*M)\rtimes G \bigoplus \Psi_p^G(\Omega)
$$ 
is invertible if and only if the following two conditions are satisfied:
\begin{enumerate}
\item[1)] the restriction  $\pi_{x,\xi}(\sigma_0(A)):l^2(G)\to l^2(G)$ of the  interior symbol on the orbit  of an arbitrary point  $(x,\xi)\in S^*M$ is invertible;
\item[2)]  the conormal symbol $\sigma_c(A)(p): L^2(\Omega)\to L^2(\Omega)$ is invertible for all $p\in L$.
\end{enumerate}
\end{proposition}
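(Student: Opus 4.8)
The plan is to decouple the two components of the symbol, treat each through the representation-theoretic description of the corresponding crossed-product algebra, and then show that the two pointwise conditions interlock through the compatibility condition defining $\Sigma$.

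First I would exploit the pullback (fiber-product) structure of $\Sigma$. By definition $\Sigma$ consists of pairs $(a_0,a_c)$ with $a_0|_{\partial S^*M}=\sigma(a_c)$, i.e.\ it is the fiber product of $C(S^*M)\rtimes G$ and $\Psi_p^G(\Omega)$ over $C(\partial S^*M)\rtimes G$ along the boundary restriction and the symbol map of \eqref{eq-phi3}. Since $\Sigma$ is a $C^*$-subalgebra of the direct sum and both structure maps are $*$-homomorphisms, $(a_0,a_c)$ is invertible in $\Sigma$ if and only if $a_0$ is invertible in $C(S^*M)\rtimes G$ and $a_c$ is invertible in $\Psi_p^G(\Omega)$. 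The forward direction is immediate from componentwise multiplication; for the converse the two inverses automatically satisfy the compatibility condition, since $(a_0^{-1})|_{\partial S^*M}=(a_0|_{\partial S^*M})^{-1}=\sigma(a_c)^{-1}=\sigma(a_c^{-1})$, so $(a_0^{-1},a_c^{-1})\in\Sigma$. Thus the proposition reduces to two equivalences: (A) invertibility of $\sigma_0(A)$ in the crossed product is equivalent to condition~1), and (B) invertibility of $\sigma_c(A)$ in $\Psi_p^G(\Omega)$ is equivalent to condition~2), granted (A).

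For (A), each $\pi_{x,\xi}$ is a $*$-homomorphism, so invertibility of $\sigma_0(A)$ forces invertibility of every $\pi_{x,\xi}(\sigma_0(A))$, which is condition~1). The substance is the converse: I would use that the orbit representations $\{\pi_{x,\xi}\}_{(x,\xi)\in S^*M}$ form a faithful family, so that $\bigoplus_{(x,\xi)}\pi_{x,\xi}$ embeds $C(S^*M)\rtimes G$ isometrically into $\mathcal{B}\bigl(\bigoplus_{(x,\xi)} l^2(G)\bigr)$. By inverse-closedness of $C^*$-subalgebras, $\sigma_0(A)$ is invertible there if and only if $\bigoplus_{(x,\xi)}\pi_{x,\xi}(\sigma_0(A))$ is invertible in the ambient algebra, that is, if and only if each $\pi_{x,\xi}(\sigma_0(A))$ is invertible \emph{and} the norms $\|\pi_{x,\xi}(\sigma_0(A))^{-1}\|$ are uniformly bounded over $(x,\xi)$. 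The uniform bound is to be extracted from compactness of the cosphere bundle $S^*M$, which is the cosphere bundle of the compact blown-up manifold $\overline{M}$. For (B), evaluation at $p$ is a homomorphism, so invertibility of $\sigma_c(A)$ gives condition~2); conversely, assuming 1) and 2), the invertibility of $\sigma_0(A)$ established in (A) makes its boundary restriction $\sigma_0(A)|_{\partial S^*M}=\sigma(\sigma_c(A))$ invertible, so Corollary~\ref{cor-7} renders the family $\sigma_c(A)(p)$ Fredholm for all $p$ and invertible at infinity. Combined with condition~2), which supplies invertibility at every finite $p$, this shows that $\sigma_c(A)(p)^{-1}$ is a bounded continuous family differing from a quantized inverse symbol by an element of $C_0(L,\mathcal{K})$, hence lies in $\Psi_p^G(\Omega)$; so $\sigma_c(A)$ is invertible there.

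The main obstacle is precisely the uniform-boundedness step inside (A), passing from pointwise invertibility of the orbit representations to invertibility in the crossed product. Individual orbit representations may depend discontinuously on $(x,\xi)$, as noted in the remark on the discontinuity of the interior symbol, so one cannot simply invoke continuity of $(x,\xi)\mapsto\|\pi_{x,\xi}(\sigma_0(A))^{-1}\|$; the estimate must instead be obtained from the local principle for crossed products together with the faithfulness of the orbit family, and in establishing that faithfulness one must be attentive to the distinction between the full and reduced crossed products.
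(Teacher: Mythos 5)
Your proposal follows essentially the same route as the paper: reduce to componentwise invertibility, handle the interior symbol via the orbit representations $\pi_{x,\xi}$, and handle the conormal symbol by combining pointwise invertibility with ellipticity of the parameter-dependent family and Corollary~\ref{cor-7} at infinity (your observation that invertibility of $\sigma_0(A)$ plus the compatibility condition is what makes $\sigma(\sigma_c(A))$ invertible is exactly the step the paper uses, if tersely). The one divergence is in part (A): where the paper disposes of the converse in one line by citing the Antonevich--Lebedev result \cite{AnLe1} that an element of the crossed product is invertible if and only if all its orbit restrictions are, you attempt to rederive this from faithfulness of the family $\{\pi_{x,\xi}\}$ plus a uniform bound on $\|\pi_{x,\xi}(\sigma_0(A))^{-1}\|$ extracted from compactness of $S^*M$. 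That particular mechanism cannot work as stated --- as you yourself note, $(x,\xi)\mapsto\pi_{x,\xi}(\sigma_0(A))$ is in general discontinuous, so compactness gives no uniform bound --- and the cited theorem is genuinely stronger than faithfulness: it asserts that the orbit representations form a \emph{sufficient} family (pointwise invertibility alone implies invertibility, with no separate uniform-bound bookkeeping), which is precisely the content of the local-trajectory principle you fall back on in your final paragraph. Since you ultimately defer to that principle, your proof is at the same level of detail as the paper's, which also invokes it by citation; your added cautions (the fiber-product check that the inverse pair again satisfies the compatibility condition, inverse-closedness of the $C^*$-subalgebra $\Psi_p^G(\Omega)$, and the full-versus-reduced crossed product issue --- relevant here because the paper takes the \emph{maximal} crossed product, through which the orbit representations need not be faithful in general) are correct refinements rather than gaps.
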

\begin{proof}
Since  $\sigma(A)$ is a pair, it suffices to prove the invertibility of each of the components. 

1. The interior symbol  $\sigma_0(A)$ is in a crossed product. By a well-known result \cite{AnLe1} an element of the crossed product is invertible if and only if its restrictions $\pi_{x,\xi}(\sigma_0(A))$ to the orbits of all points $(x,\xi)\in S^*M$ are invertible. Hence, by our assumptions the interior symbol is invertible in the crossed product.

2. Let us prove the invertibility of the conormal symbol $\sigma_c(A)(p)\in C(L,\mathcal{B}L^2(\Omega))$. By assumption, this symbol is invertible for each fixed $p$. We have to show that the norm of the inverse family is uniformly bounded  (continuity of the inverse family is obvious). Now uniform boundedness on an arbitrary finite interval of $L$ follows from continuity, while for large values of the parameter uniform boundedness follows from the fact that the symbol $\sigma(\sigma_c(A))\in C(\partial S^*M)\rtimes G$ is invertible, i.e. the parameter-dependent family  $\sigma_c(A)(p)$ is elliptic and by Corollary~\ref{cor-7} the inverse family is uniformly bounded for large  $p$.

The proof of the proposition is complete. 
\end{proof}

\paragraph{Proof of Theorem~\ref{th-finite1} (Fredholm property in weighted Sobolev spaces $H^{s,\gamma}$).}

Let  
$$
  D:H^{s,\gamma}(M)\to H^{s-m,\gamma+m}(M)
$$ 
be an elliptic $G$-operator of order  $m$ on  $M$. We want to prove that it is Fredholm. 

1. (Reduction to operator on $L^2$.)  First, we reduce $D$ to a zero-order operator acting on  $L^2(M)$. To this end,   consider the composition 
$$
L^{2}(M)\stackrel{D_1}\longrightarrow H^{s,\gamma}(M) \stackrel{D}\longrightarrow H^{s-m,\gamma+m}(M) \stackrel{D_2}\longrightarrow L^{2}(M), 
$$
where  $D_1,D_2$ stand for some elliptic $\psi$DOs on  $M$, acting in corresponding weighted spaces.  This composition is a $G$-operator  denoted by 
$$
 D'=D_2DD_1.
$$  
Since   $D_{1,2}$ are elliptic and by classical conical theory Fredholm, the $G$-operators  $D$ and $D'$ are Fredholm or not Fredholm simultaneously. Hence, by the multiplicative property of the symbols we get
$$
\sigma(D')=\sigma(D_2)\sigma(D)\sigma(D_1).
$$
Hence, ellipticity of  $D$ implies ellipticity of $D'$. 

2. Let us show that ellipticity of operator $D'$ in the sense of Definition~\ref{def-ell4} implies invertibility of its symbol in the algebra of symbols  
$$
\sigma(D')\in C(S^*M)\rtimes G \bigoplus  C(L,\mathcal{B}L^2(\Omega)),\qquad L=\{\re p=0\}.
$$ 
Indeed, invertibility  conditions for such elements were obtained in Proposition~\ref{prop-5}. Let us show, that the conditions of this proposition are satisfied. Indeed, the conormal symbol  $\sigma_c(D)(p)$ is invertible by assumption for all $p\in L$. It remains to prove the invertibility of the restriction $\pi_{x,\xi}(\sigma_0(D))$ to the orbit of an arbitrary point $(x,\xi)\in S^*M$. Consider the diagram
\begin{equation}\label{eq-diag1}
\xymatrix{ l^2(G,m_{x,\xi })\ar[rr]^{\sigma(D')(x,\xi)} \ar[d]^\simeq_{\sqrt{m_{x,\xi }}}& &
l^2(G,m_{x,\xi }) \ar[d]^\simeq_{\sqrt{m_{x,\xi }}}\\
l^2(G) \ar[rr]_{\pi_{x,\xi}(\sigma (D'))} & & l^2(G),}
\end{equation}
where the vertical isomorphisms are products with the square root of the weight function 
 $m_{x,\xi }(g)$ in \eqref{measure-invariant1} (we set $s=\gamma=0$).  This diagram is commutative (this fact is easy to prove directly if  we write operator  $D'$ as in~\eqref{eq-ds1}). Now, since  $D'$ is elliptic in the sense of Definition~\ref{def-ell4}, it follows that the upper row in~\eqref{eq-diag1} is an isomorphism. Since the diagram commutes, we obtain that the lower row is an isomorphism as well. Hence, the restriction $\pi_{x,\xi}(\sigma_0(D'))$ of the symbol to the orbit of  $x,\xi$ is invertible as desired. 

The proof of Theorem~\ref{th-finite1}  is now complete.

\section{Appendix. Computation of the interior symbol}

{Let us give the details of the computation of the interior  symbol \eqref{traj-symbol1} and the weight \eqref{measure-invariant1} using the method of frozen coefficients. 

Let us consider a differential operator $D$ on $M$,  and $x_0\in M\setminus pt$. By freezing the coefficients of $D$ at the points of the orbit  $Gx_0\subset M$ and omitting lower order terms, we obtain an operator with constant coefficients in the space of functions on the orbit:\footnote{Recall that the norm in the Sobolev space  $H^s(V,|\cdot|,d\xi)$, where $V$ is a vector space, is defined by a norm $|\cdot|$ and measure $d\xi$ on $V^*$ by the formula
$$
\|f\|^2_s=\int_V  (1+|\xi|^2)^s|\widetilde f(\xi)|^2 d\xi,\text{where $\widetilde f(\xi)$ is the Fourier transform of $f(x)$}.
$$}
\begin{multline*}
\bigoplus_{g\in G } \sigma_0(D)\left(g^{-1}(x_0),-i\frac{\partial}{\partial x_g}\right):\bigoplus_g H^s\Bigl(T_{g^{-1}(x_0)}M,|\cdot|_{g^{-1}(x_0)},\mu_{\gamma}(g^{-1}(x_0))\Bigr)\longrightarrow\\
\bigoplus_g H^{s-m}\Bigl(T_{g^{-1}(x_0)}M,|\cdot|_{g^{-1}(x_0)},\mu_{\gamma+m}(g^{-1}(x_0))\Bigr).
\end{multline*}
Fourier transform $x_g\to \xi_g$ at each point of the orbit takes this operator to an isomorphic operator:\footnote{Here and below $L^2(X,\mu_X)$ is the  $L^2$-space of functions on a space $X$ square integrable with respect to the measure $\mu_X$.}
\begin{multline}\label{eq-tempo2}
\bigoplus_{g\in G } \sigma_0(D)\left(g^{-1}(x_0),\xi_g\right):\bigoplus_g L^2\Bigl(T^*_{g^{-1}(x_0)}M,(1+|\xi_g|^2)^s\mu_\gamma(g^{-1}(x_0))\Bigr)\longrightarrow\\
\bigoplus_g L^2\Bigl(T^*_{g^{-1}(x_0)}M,(1+|\xi_g|^2)^{s-m}\mu_{\gamma+m}(g^{-1}(x_0))\Bigr).
\end{multline}
As usual in elliptic theory, the symbol \eqref{eq-tempo2} is considered off the zero section, i.e., we replace the operator   \eqref{eq-tempo2}  by
\begin{multline}\label{eq-opl2}
\bigoplus_{g\in G } \sigma_0(D)\left(g^{-1}(x_0),\xi_g\right):\bigoplus_g L^2\Bigl(T^*_{g^{-1}(x_0)}M, |\xi_g|^{2s}\mu_\gamma(g^{-1}(x_0))\Bigr)\longrightarrow \\
\bigoplus_{g\in G } L^2\Bigl(T^*_{g^{-1}(x_0)}M, |\xi_g|^{2(s-m)}\mu_{\gamma+m}(g^{-1}(x_0))\Bigr).
\end{multline}

Operator \eqref{eq-opl2} acts on a direct sum of spaces, corresponding to different points of  $M$. This is not quite convenient from the point of view of computations. To eliminate this drawback, we replace   \eqref{eq-opl2} by an isomorphic operator acting on the space of functions at the point  $x_0$. To this end, we note that for   $g\in G$ the codifferential of  $g:M\to M$ is an isomorphism of  cotangent spaces
$$
\partial g:T^*_{g^{-1}(x_0)}M \longrightarrow T^*_{x_0}M.
$$ 
Hence, the corresponding shift operator is invertible and defines an isometric isomorphism of $L^2$-spaces:\footnote{Here we use the following elementary fact: if $\varphi:X\to Y$ is a diffeomorphism, then the mapping  $\varphi^*:L^2(Y,\mu_Y)\to L^2(X,\mu_X)$ defines an isometric isomorphism if and only if $\mu_X=\varphi^*\mu_Y$.}
\begin{equation}\label{eq-op22}
T_{\partial g}^{-1}=(\partial g)^*: L^2\Bigl(T^*_{ x_0 }M, |\xi|^{2s}\mu_\gamma(x_0) m_{s,\gamma}(g)\Bigr)\longrightarrow
L^2\Bigl(T^*_{ g^{-1}(x_0) }M,|\xi_g|^{2s}\mu_{\gamma }(g^{-1}(x_0))\Bigr),
\end{equation}
where the weight function $m_{s,\gamma}(g)$ on  $T^*_{x_0}M$  is defined as
$$
m_{s,\gamma}(g)= \frac{\Bigl({\partial g^{-1}}^{*}(\mu_\gamma |\xi|^{2s})\Bigr)(x_0,\xi)}{\mu_\gamma(x_0)|\xi|^{2s}} ,
$$
which coincides with  \eqref{measure-invariant1} up to invertible factor  $|\xi|^{2s}$.
Using isometric isomorphisms \eqref{eq-op22},  the spaces in  \eqref{eq-opl2} can be replaced by isomorphic subspaces of the form
\begin{equation}\label{eq-isom4}
\begin{array}{ccc}
\bigoplus_{g\in G } T_{\partial g} :\bigoplus_g L^2\Bigl(T^*_{g^{-1}(x_0)}M,|\xi|^{2s}\mu_\gamma(g^{-1}(x_0))\Bigr) & \stackrel\simeq\longrightarrow&
L^2\Bigl(T^*_{x_0}M, |\xi| ^{2s}\mu_{\gamma}(x_0); l^2(G,m_{s,\gamma})\Bigr), \vspace{2mm}\\
\{w(\xi_g)\} & \longmapsto & \{w(\partial g^{-1}(\xi))\},
\end{array}
\end{equation}
where the mapping ranges in the space of functions on  $T^*_{x_0}M$ with values in the space $l^2(G,m_{s,\gamma})$ of functions on the group.    

So, taking into account \eqref{eq-opl2} and \eqref{eq-isom4}, we see that the symbol of $D$, which is obtained by freezing the coefficients of the operator on the orbit and applying the Fourier transform, can be represented as the family of operators of multiplication 
\begin{equation}\label{eq-18}
  \begin{array}{cccc}
      \sigma_0(D)(x_0,\xi): & l^2(G,m_{s,\gamma})& \longrightarrow & l^2(G,m_{s-m,\gamma+m}), \vspace{2mm}\\
        & w(g) & \longmapsto &  \sigma_0(D)\bigl(g^{-1}(x_0), \partial g^{-1}(\xi)\bigr)w(g)
  \end{array}
\end{equation}
by the values of the interior symbol at the corresponding points on the orbit of  $(x_0,\xi)\in T^*_0M$.

A similar computation (which we omit for brevity) shows that the symbol of the shift operator  $T_h$ is equal to the right shift operator on the group:
\begin{equation}\label{eq-19}
  \begin{array}{cccc}
      \sigma_0(T_h)(x_0,\xi)=\mathcal{T}_h: & l^2(G,m_{s,\gamma})& \longrightarrow & l^2(G,m_{s ,\gamma }), \vspace{2mm}\\
      &   w(g) & \longmapsto &  w(gh).
  \end{array}
\end{equation}
Formulas \eqref{eq-18} and  \eqref{eq-19} give the desired formula for the symbol \eqref{traj-symbol1}, and also the formula  \eqref{measure-invariant1} for the weight function.
}


\addcontentsline{toc}{section}{References}


\end{document}